\documentclass[12pt]{amsart}

\usepackage[pagewise]{lineno}

\usepackage{amsmath,amssymb,amsthm,amsfonts,mathrsfs}
\usepackage{fullpage}
\usepackage{relsize}
\usepackage{colonequals}

\usepackage[colorlinks=true,
linkcolor=blue,
anchorcolor=blue,
citecolor=red
]{hyperref}

\allowdisplaybreaks 
\newtheorem{theorem}{Theorem}[section]
\newtheorem{lemma}[theorem]{Lemma}
\newtheorem{corollary}[theorem]{Corollary}
\newtheorem{proposition}[theorem]{Proposition}

\theoremstyle{definition}

\theoremstyle{remark}
\newtheorem{remark}[theorem]{Remark}
\newcommand{\N}{\mathbb{N}}
\newcommand{\Z}{\mathbb{Z}}

\newcommand{\R}{\mathbb{R}}
\newcommand{\C}{\mathbb{C}}
\newcommand{\sS}{\mathscr{S}}
\newcommand{\sF}{\mathscr{F}}
\newcommand{\sH}{\mathscr{H}}
\newcommand{\cR}{\mathcal{R}}
\newcommand{\cA}{\mathcal{A}}

\newcommand{\ve}{\varepsilon}
\newcommand{\on}{\operatorname}
\renewcommand{\mod}[1]{\,(\on{mod}#1)}
\newcommand{\of}[1]{\left(#1\right)}
\newcommand{\set}[1]{\left\{#1\right\}}
\newcommand{\BEu}[1]{\underset{#1}{\mathlarger{\mathlarger{\mathbb{E}}}^{~}}\,}

\author[X. Su]{Xiang Su}
\address{School of Mathematics and Statistics, Yunnan University, Kunming, Yunnan 650500, China}
\email{suxiang8909@163.com}

\author[B. Wang]{Biao Wang}
\address{School of Mathematics and Statistics, Yunnan University, Kunming, Yunnan 650500, China}
\email{bwang@ynu.edu.cn}

\author{Shaoyun Yi}
\address{School of Mathematical Sciences, Xiamen University, Xiamen, Fujian 361005, China}
\email{yishaoyun926@xmu.edu.cn}

\thanks{Corresponding author: Biao Wang (bwang@ynu.edu.cn).}
\date{\today}

\makeatletter
\@namedef{subjclassname@2020}{\textup{}2020 Mathematics Subject Classification}
\makeatother

\title{Asymptotic uncorrelations between functions with squarefull kernel and functions of invariant average}
\subjclass[2020]{11N37, 37A44}
\keywords{Functions with squarefull kernel, prime number theorem, number of prime factors, unique ergodicity}

\begin{document}
	
\begin{abstract}
In 1986, Ivi\'c and Tenenbaum introduced arithmetic functions with squarefull kernel, which are also called $s$-functions. Later, Erd\H{o}s and Ivi\'c gave an asymptotic estimate on the shifted convolution sums of $s$-functions.  Recently, Bergelson and Richter studied the orbits along the prime Omega function in a uniquely ergodic topological dynamical system and established a new dynamical generalization of the prime number theorem (PNT). These orbits can be viewed as functions of invariant average under multiplications. In this paper, we show that both $s$-functions and their shifted convolutions are asymptotically uncorrelated to the orbits along the prime Omega function in a uniquely ergodic system. As a consequence, we obtain a refinement of the PNT via the local distribution of $s$-functions. Furthermore, several variants of these results are established as well.
\end{abstract}
\maketitle

\numberwithin{equation}{section}

\section{Introduction and statement of results}

In the analytic number theory, a fundamental theme is the study of  the correlation between two arithmetic functions. For any two arithmetic functions $a, b\colon \N \to \C$, they are called \textit{asymptotically uncorrelated} if 
\begin{equation*}
    \lim_{N \to \infty} \Big| \frac1N\sum_{n=1}^N a(n) \overline{b(n)} - \bigg(\frac1N\sum_{n=1}^N a(n) \bigg)\bigg( \frac1N\sum_{n=1}^N \overline{b(n)} \bigg) \Big|=0.
\end{equation*}
In 2010, Sarnak \cite{Sarnak2010ias,Sarnak2010} conjectured that any deterministic sequence is asymptotically uncorrelated to the M\"obius function. This conjecture gives a generalization of the prime number theorem (PNT) in dynamical systems. Let $(X, \nu, T)$ be a uniquely ergodic topological dynamical system, and denote by $C(X)$  the set of continuous functions on $X$. Let $h \in C(X)$ and $x\in X$. In 2022, to extend Sarnak's conjecture, Bergelson and Richter \cite{BergelsonRichter2022} established a new  dynamical generalization of the PNT, showing that
\begin{equation}\label{BR2022thmA}
		\lim_{N\to\infty}\frac1N\sum_{n=1}^N h(T^{\Omega(n)}x)=\int_X h \,d\nu,
\end{equation}
where $\Omega(n)$ denotes the number of prime divisors of $n$ counted with multiplicity. 
Let $\mu(n)$ be the M\"obius function. They also showed that
\begin{equation}\label{BR_mu}
		\lim_{N\to\infty}\frac1N\sum_{n=1}^N \mu^2(n)h(T^{\Omega(n)}x)=\frac6{\pi^2}\int_X h \,d\nu,
\end{equation}
which is another dynamical generalization of the PNT. 

Notice that $\mu^2(n)$ is the characteristic function of squarefree numbers, and the natural density of squarefree numbers is equal to $6/\pi^2$. By \eqref{BR_mu}, $\mu^2(n)$ and the orbit $\{h(T^{\Omega(n)}x) \}_{n\in \N}$ for any $x\in X$ are asymptotically uncorrelated. In general, one may ask if there are any other arithmetic functions $a:\N\to\C$ such that $a(n)$ is asymptotically uncorrelated to the orbits in \eqref{BR2022thmA}, that is, if $a(n)$ satisfies that
\begin{equation}\label{main_problem}
\lim_{N\to\infty}\frac1N\sum_{n=1}^N a(n)h(T^{\Omega(n)}x)=\lim_{N\to\infty}\frac1N\sum_{n=1}^N a(n)\cdot\int_X h \,d\nu
\end{equation}
for any $h\in C(X)$ and $x\in X$, provided the mean value of $a(n)$ exists.

 In 2023, Loyd \cite{Loyd2023} proved that \eqref{main_problem} holds if one takes $a(n)$ to be the arithmetic function in the Erd\H{o}s-Kac theorem. Later, in \cite{Wang2022ffa}, the second author proved that  \eqref{main_problem} holds if one takes $a(n)=1_{P^+(n)\in S}$ for any subset $S$ of primes of natural density in all primes,  where $P^+(n)$ denotes the largest prime factor of $n$. Here, for a statement $P$, the indicator symbol $1_{P}$ is equal to $1$ if $P$ is true and zero otherwise. In \cite{WWYY2025, LWWY2025aa}, the authors unified this result with Loyd's result. Recently, in \cite{Wang2025jnt, Wang2026BAustMS, WangYi2026jnt, Wang2025CMB, DengWang2026}, the authors proved that \eqref{main_problem} also holds if one takes $a(n)$ to be $1_{\Omega(n)-\omega(n)=k_1}$, $\mu^2(n(n+1))$, $\mu^2(n^2+1)$, $\mu_{k_2}(n)$  and $\frac{\varphi(n)}{n}$ for any integers $k_1\ge0$ and $k_2\ge2$, respectively, where $\omega(n)$ denotes the number of distinct prime divisors of $n$, and $\mu_{k_2}(n)$ denotes the M\"obius function of order $k_2$ defined by Apostol in \cite{Apostol1970}. More interesting examples of $a(n)$ such that \eqref{main_problem} holds can be found in these references. 
 
To establish these results on \eqref{main_problem}, the following invariant property of the orbits along $\Omega(n)$ plays a vital role. By \eqref{BR2022thmA}, using $T^{\Omega(dn)}x= T^{\Omega(n)}(T^{\Omega(d)}x)$ for any $d, n\in N$ and taking $T^{\Omega(d)}x$ as an initial point in $X$,  we have
\begin{equation}\label{inv_property}
\lim_{N\to\infty}\frac1N\sum_{n=1}^N h(T^{\Omega(dn)}x)=\int_X h \,d\nu
\end{equation}
for any $d\in \N$. In \cite{Wang2025jnt, LWWY2025aa}, this property is defined  for general arithmetic functions as follows. Let $c\colon \N\to\C$ be a bounded arithmetic function. We say that $c(n)$ is of \textit{invariant average under multiplications}, or \textit{invariant average} for short, if the mean value of $c(n)$ exists and satisfies 
\begin{equation}\label{eqn_inv_ave2}
    \lim_{N\to\infty}\frac1N\sum_{n=1}^N c(dn)= \lim_{N\to\infty}\frac1N\sum_{n=1}^N c(n)
\end{equation}
for all $d\in \N$. By \eqref{inv_property}, the orbit $\{h(T^{\Omega(n)}x)\}_{n\in \N}$ for any $x\in X$ is of invariant average $\int_X h \,d\nu$ under multiplications. By \cite[Corollary 1.6]{Xiao2025}, so is the orbit $\{h(T^{\Omega(\lfloor \alpha n +\beta\rfloor)}x)\}_{n\in \N}$ along Beaty sequences $\lfloor \alpha n +\beta\rfloor$, where $\alpha>0$ and $\beta\in \R$. Here, $\lfloor t \rfloor$ denotes the largest integer such that $0\leq t-\lfloor t \rfloor <1$. In this paper, we will consider a class of functions that are asymptotically uncorrelated to functions of invariant average.

Let $\sS$ be the set of all squarefree numbers, and let $\sF$ be the set of squarefull numbers (an integer $n$ is squarefull if $p^2\mid n$ whenever $p\mid n$), $1\in \sS$ and $1\in\sF$. Every integer $n\geq 1$ can be written uniquely as $n=qf$ with $q=q(n)\in \sS, f=f(n)\in \sF$ and $(q, f)=1$. We call $q=q(n)$ the squarefree part of $n$ and $f=f(n)$ the squarefull part of $n$. And we always use $f(n)$ to denote the squarefull part of $n$ in this paper. A nonnegative and integer-valued arithmetic function $a\colon \N\to\Z_{\ge0}$ is called a \textit{function with squarefull kernel}, or simply an \textit{$s$-function}, if $a(n) = a(f(n))$ for all $n\ge1$ and $a(n)\ll n^\ve$ for any $\ve>0$. It was introduced by Ivi\'c and Tenenbaum in \cite{IvicTenenbaum1986}\footnote{In \cite{IvicTenenbaum1986},  $a\colon\N\to\Z_{\ge0}$ is called an \textit{$s$-function} if $a(n) = a(f(n))$ for all $n\ge1$. In this paper, the condition  that $a(n)\ll n^\ve$ for any $\ve>0$ is included for $s$-functions without influence on our main results. In \cite{LvWang2020}, an $s$-function is also called a squarefull kernel function.}. For example, the number of nonisomorphic abelian groups of order $n$, the number of nonisomorphic semisimple rings with $n$ elements, $\mu^2(n)$, $\Omega(n)-\omega(n)$, $\Omega(n)-\omega_1(n)$ and $\omega_{l}(n), l\ge2$,  are all $s$-functions. Here, $\omega_k(n)$ denotes the number of prime factors of $n$ with multiplicity $k$ for $k\ge1$, which is introduced by Elma and Liu in \cite{ElmaLiu2022}. In the following theorem, we will show that all $s$-functions are asymptotically uncorrelated to bounded functions of invariant average.

\begin{theorem}\label{mainthm_disjoint}
	Let $b\colon\N\to\C$ be an $s$-function, and let $c(n)$ be a bounded arithmetic function of invariant average $A$ under multiplications. Then we have
	 	\begin{equation}\label{mainthm_disjoint_eqn}
		\lim_{N\to\infty}\frac1N\sum_{1\leq n\leq N} b(n) c(n) = \alpha \cdot A,
	\end{equation}
	where 
	\begin{equation}\label{dfn_alpha}
\alpha=\lim_{N\to\infty}\frac1N\sum_{1\leq n\leq N} b(n) =\frac6{\pi^2}\sum_{\substack{f\in \sF }} \frac{b(f)}{f} \prod_{p\mid f} \big(1+\frac1p\big)^{-1}.
\end{equation}
\end{theorem}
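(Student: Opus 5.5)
Write each $n$ uniquely as $n=qf$ with $q\in\sS$, $f\in\sF$, $(q,f)=1$. Since $b(n)=b(f)$, we get
\[
\frac1N\sum_{n\le N} b(n)c(n)=\frac1N\sum_{\substack{f\le N\\ f\in\sF}} b(f)\sum_{\substack{q\le N/f\\ q\in\sS,\,(q,f)=1}} c(qf).
\]
The plan is to evaluate the inner sum for each fixed $f$, and then control the tail over large $f$.

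For fixed $f$, detect $q\in\sS$ with $(q,f)=1$ by $\mu^2(q)1_{(q,f)=1}=\sum_{d^2\mid q}\mu(d)\sum_{e\mid (q,f)}\mu(e)$. Writing $q=d^2em$, the inner sum becomes
\[
\sum_{e\mid f}\mu(e)\sum_{d}\mu(d)\sum_{m\le N/(fd^2e)} c(fed^2m).
\]
For fixed $d$ and $e$, the invariant-average hypothesis gives $\sum_{m\le M}c(fed^2m)=AM+o(M)$. The $d$-sum has to be truncated at some $D$, since the invariant-average property is not uniform in the multiplier. The tail $d>D$ is bounded trivially by $\|c\|_\infty\, N/(fe)\sum_{d>D}d^{-2}\ll N/(fD)$. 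Letting $N\to\infty$ and then $D\to\infty$ yields
\[
\frac1N\sum_{\substack{q\le N/f\\ q\in\sS,\,(q,f)=1}} c(qf)\to \frac{A}{f}\sum_{e\mid f}\frac{\mu(e)}{e}\cdot\frac{6}{\pi^2}=\frac{6A}{\pi^2 f}\prod_{p\mid f}\Bigl(1+\frac1p\Bigr)^{-1}.
\]
Here we used $\sum_d\mu(d)/d^2=6/\pi^2$, and that $(1-\frac1p)$ combines with the coprimality to give $(1+\frac1p)^{-1}$. More carefully, the density of squarefree numbers coprime to $f$ is $\frac6{\pi^2}\prod_{p\mid f}\frac{p}{p+1}$; this is best computed via the Euler product, with $d$ restricted to $(d,f)=1$, which is equivalent. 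In the special case $c\equiv1$ this reproduces the formula for $\alpha$ in \eqref{dfn_alpha}. That settles the second equality, given convergence of the $f$-series.

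The main obstacle is interchanging the limit with the sum over $f\in\sF$, since $b$ is unbounded. Split at a parameter $F$. For $f\le F$ we have finitely many terms, each handled above. For $f>F$ we bound trivially:
\[
\frac1N\sum_{\substack{F<f\le N\\ f\in\sF}}|b(f)|\,\|c\|_\infty\frac{N}{f}\ll \sum_{\substack{f>F\\ f\in\sF}}\frac{f^{\ve}}{f}.
\]
Squarefull numbers have counting function $\ll x^{1/2}$, so this series converges for $\ve<1/2$ by partial summation, and the tail tends to $0$ as $F\to\infty$. The same bound shows that the series defining $\alpha$ converges absolutely.

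Combining the pieces, we take $\limsup$ and $\liminf$ as $N\to\infty$, then let $D\to\infty$ and $F\to\infty$. This gives \eqref{mainthm_disjoint_eqn} with $\alpha$ as in \eqref{dfn_alpha}, and taking $c\equiv1$ (which has invariant average $1$) identifies $\alpha$ with the mean value of $b$.
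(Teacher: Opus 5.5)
Your argument is correct, but it unfolds the condition ``$q$ squarefree and coprime to $f$'' differently from the paper. The paper applies Lemma~\ref{lem_sqfree_coprime}, whose main term is an infinite series over $h\in\sH(f)$ weighted by $\lambda(h)/h$; these weights sum to $\prod_{p\mid f}(1+\frac1p)^{-1}$. That forces an extra truncation $h\le H$, controlled by the smooth-number bound of Lemma~\ref{lem_reciprocical_friable}. The paper also estimates the tail $f>F$ on the already expanded expression, where it picks up a factor $\prod_{p\mid f}(1+\frac1p)$. You instead use the finite sum $\sum_{e\mid (q,f)}\mu(e)$ together with $\sum_{d^2\mid q,\,(d,f)=1}\mu(d)$. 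So only the $d$-sum needs truncating and no smooth-number input is required. You also dispose of the $f$-tail before any expansion, via the trivial bound $\bigl|\sum_{q\le N/f}c(qf)\bigr|\le\|c\|_\infty N/f$. The constant then appears as $\prod_{p\mid f}(1-\frac1p)\cdot\frac6{\pi^2}\prod_{p\mid f}(1-p^{-2})^{-1}$. The paper's route buys the error term in Lemma~\ref{lem_sqfree_coprime}, uniform in $f$, which it reuses to get the rate $O(N^{1/2+\ve})$ in Theorem~\ref{mainthm_sfcn}. Yours is purely qualitative, but shorter and self-contained, which is all the statement needs.

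One correction to your write-up: the restriction $(d,f)=1$ is not optional. Without it, the substitution $q=d^2em$ is wrong whenever $(d,e)>1$; the correct condition is $\operatorname{lcm}(d^2,e)\mid q$. Your first displayed limit is also false as written, since $\frac{A}{f}\sum_{e\mid f}\frac{\mu(e)}{e}\cdot\frac6{\pi^2}=\frac{6A}{\pi^2f}\prod_{p\mid f}(1-\frac1p)$. Impose $(d,f)=1$ from the outset, which is legitimate because $d^2\mid q$ and $(q,f)=1$ force it. Then $(d,e)=1$, the substitution is exact, and the rest of your argument goes through unchanged. For each fixed $f$, the $d$-tail bound just picks up a harmless factor $\sum_{e\mid f}1/e$.
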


As an application of Theorem~\ref{mainthm_disjoint}, we give a refinement of \eqref{BR2022thmA} via the local density of $s$-functions as follows. Let $a(n)$ be a nonnegative and integer valued arithmetic function. Given an integer $k\ge0$, the \textit{local density} $d_k$ of $a(n)$ is defined as
\begin{equation*}
	d_k = \lim_{N\to\infty} \frac1N\sum_{\substack{n\leq N\\a(n)=k}}1,
\end{equation*}
provided this limit exists. For example, in 1947, Kendall and Rankin
\cite{KendallRankin1947} showed that the local densities of the number of nonisomorphic abelian groups of order $n$ exist. In 1955, R\'enyi \cite{Renyi1955} showed that all local densities for  $\Omega(n)-\omega(n)$ exist, and their generating function is given by
\begin{equation}\label{eqn_Renyi_d_k_gfcn}
	\sum_{k=0}^\infty d_kz^k=\prod_{p} \of{1-\frac{1}{p}}\of{1+\frac{1}{p-z}}
\end{equation}
for $|z|<2$, where the symbol $p$ runs over all prime numbers. In 1986, in a simple argument, Ivi\'c and Tenenbaum \cite{IvicTenenbaum1986} showed that all local densities for $s$-functions exist. They proved that if $a(n)$ is an $s$-function, then
\begin{equation}
	\sum_{\substack{n\leq N\\a(n)=k}}1 = d_k \cdot N + O(N^{\frac12}\log^2N)
\end{equation}
holds uniformly for $N\ge1$ and $k\ge0$, where $d_k$ is given explicitly as
\begin{equation}\label{def_d_k}
	d_k=\frac6{\pi^2}\sum_{\substack{f\in \sF\\a(f)=k} }  \frac1f \prod_{p\mid f} \big(1+\frac1p\big)^{-1}.
\end{equation}

Taking $b(n)=1_{\Omega(n)-\omega(n)=k}$ and $c(n)=h(T^{\Omega(n)}x)$ in Theorem~\ref{mainthm_disjoint}, we obtain \cite[Corollary 1.5]{Wang2025jnt}. In general, taking $b(n)=1_{a(n)=k}$ and $c(n)=h(T^{\Omega(n)}x)$ in Theorem~\ref{mainthm_disjoint}, we obtain the following refinement of Bergelson-Richter's theorem immediately. 

\begin{corollary}\label{thm_BR_KR}
Let $a(n)$ be an $s$-function. Let $(X, \nu, T)$ be a uniquely ergodic system. Then  we have 
\begin{equation}\label{eqn_BR_Renyi} 
	\lim_{N\to\infty}\frac1N \sum_{\substack{1\leq n \leq N\\ a(n)=k}} h(T^{\Omega(n)}x)= d_k\cdot \int_X h \,d\nu
\end{equation}
for any $k\ge0$, $h\in C(X)$ and $x\in X$, where $d_k$ is the local density of $a(n)$ of level $k$ defined in \eqref{def_d_k}.
\end{corollary}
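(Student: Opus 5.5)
Corollary~\ref{thm_BR_KR} will follow by applying Theorem~\ref{mainthm_disjoint} with $b(n)=1_{a(n)=k}$ and $c(n)=h(T^{\Omega(n)}x)$; the work is to check the two hypotheses of the theorem and to identify the constants.

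\textbf{Step 1: $b$ is an $s$-function.} Fix $k\ge0$ and put $b(n)=1_{a(n)=k}$. It is nonnegative and integer valued, and bounded by $1$, so $b(n)\ll n^\ve$ holds trivially. Since $a(n)=a(f(n))$ for every $n$, we get $b(n)=1_{a(f(n))=k}=b(f(n))$. Hence $b$ is an $s$-function.

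\textbf{Step 2: $c$ is of invariant average $\int_X h\,d\nu$.} Put $c(n)=h(T^{\Omega(n)}x)$. It is bounded by $\|h\|_\infty$, since $h$ is continuous on the compact space $X$. By \eqref{BR2022thmA} its mean value exists and equals $\int_X h\,d\nu$. Fix $d\in\N$. Since $\Omega$ is completely additive, $T^{\Omega(dn)}x=T^{\Omega(n)}(T^{\Omega(d)}x)$. Applying \eqref{BR2022thmA} with the initial point $T^{\Omega(d)}x$ gives \eqref{inv_property}, which is exactly \eqref{eqn_inv_ave2} with $A=\int_X h\,d\nu$.

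\textbf{Step 3: identify the constant.} Theorem~\ref{mainthm_disjoint} now gives
\[
\lim_{N\to\infty}\frac1N\sum_{\substack{1\le n\le N\\ a(n)=k}} h(T^{\Omega(n)}x)=\alpha\int_X h\,d\nu .
\]
By \eqref{dfn_alpha},
\[
\alpha=\frac6{\pi^2}\sum_{f\in\sF}\frac{1_{a(f)=k}}{f}\prod_{p\mid f}\big(1+\tfrac1p\big)^{-1},
\]
which is precisely $d_k$ as defined in \eqref{def_d_k}. This proves \eqref{eqn_BR_Renyi}.

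Each step is routine. The only point needing care is Step 2, where the invariance under multiplication by $d$ must hold for every $x$. This is why unique ergodicity is required: \eqref{BR2022thmA} holds for every starting point, in particular for $T^{\Omega(d)}x$.
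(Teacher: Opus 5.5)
Your proposal is correct and takes the same approach as the paper, which derives the corollary immediately from Theorem~\ref{mainthm_disjoint} with $b(n)=1_{a(n)=k}$ and $c(n)=h(T^{\Omega(n)}x)$, using \eqref{inv_property} for the invariant average and reading off $\alpha=d_k$ from \eqref{dfn_alpha} and \eqref{def_d_k}. Your checks that $b$ is an $s$-function and that $c$ has invariant average $\int_X h\,d\nu$ spell out details the paper leaves implicit.
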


In \cite{ErdosIvic1987}, Erd\H{o}s and Ivi\'c also considered the distribution of values of $s$-functions at consecutive integers. More precisely, they proved in \cite[Theorems 1 and 2]{ErdosIvic1987} that for any two $s$-functions $a_1, a_2:\N\to\C$, we have
\begin{align}
&\sum_{\substack{n\leq N\\a_1(n)=a_2(n+1)}} 1 = \rho_1 \cdot N + O(N^{\frac34}\log^4N), \label{ErdosIvic1987_thm1}\\ 
& \sum_{n\leq N} a_1(n)a_2(n+1)= \rho_2 \cdot N + O(N^{\frac34+\ve}), \label{ErdosIvic1987_thm2}
\end{align}
where\footnote{\label{footnote}In \cite{ErdosIvic1987}, there is an extra factor $6/\pi^2$ for the expressions of $\rho_1$ and $\rho_2$. On page 56 of \cite{ErdosIvic1987}, they used the identity $\sum\limits_{\substack{d=1 \\ (d,abc)=1}}^\infty \frac{\mu(d)}{d^2}=\frac6{\pi^2}\prod\limits_{p\nmid ab}(1-p^{-2}) \prod\limits_{p \mid c} (1-p^{-2})^{-1}$ for $(c,ab)=1$, which is equivalent to saying that $\prod\limits_{p\nmid abc}(1-p^{-2}) = \prod\limits_{p\nmid ab}(1-p^{-2}) \prod\limits_{p\nmid c}(1-p^{-2})$. This is not true. Indeed, we have $\prod\limits_{p\nmid abc}(1-p^{-2}) = \frac{\pi^2}{6}\prod\limits_{p\nmid ab}(1-p^{-2}) \prod\limits_{p\nmid c}(1-p^{-2})$ by Lemma~\ref{lem_coprime} in Section~\ref{sec_mainthm_BR_EI_pf}.} 
\begin{align}
	\rho_1 &= \sum_{\substack{f_1,f_2 \in\sF\\ (f_1, f_2)=1\\ a_1(f_1)=a_2(f_2)}}\frac{1}{f_1 f_2}\prod_{p\mid f_1 f_2}(1-\frac{1}{p})\prod_{p\nmid f_1 f_2}(1-\frac{2}{p^{2}}), \label{dfn_rho_1}\\
	\rho_2 &=\sum_{\substack{f_1,f_2 \in\sF\\ (f_1, f_2)=1}}\frac{a_1(f_1)a_2(f_2)}{f_1 f_2}\prod_{p\mid f_1 f_2}(1-\frac{1}{p})\prod_{p\nmid f_1 f_2}(1-\frac{2}{p^{2}}). \label{dfn_rho_2}
\end{align}

The proofs of \eqref{ErdosIvic1987_thm1} and \eqref{ErdosIvic1987_thm2} are analogous to each other. Indeed, \eqref{ErdosIvic1987_thm1} and \eqref{ErdosIvic1987_thm2} may be unified by generalizing the definition of $s$-functions. Let $r\ge1$ be an integer. A function $a: \N^r\to\C$ is called a function with squarefull kernel, or simply an $s$-function\footnote{The $s$-functions are defined as integer-valued arithmetic functions before. In the rest of the article, they
are defined to be complex-valued.}, if it satisfies that $a(n_1,\dots, n_r)=a(f(n_1),\dots,f(n_r))$ and $a(n_1,\dots, n_r) \ll (n_1 \cdots n_r)^\ve$, where $f(n_i)$ is the squarefull part of $n_i$ for $1\leq i\leq r$. For $r=1$, this coincides with the previous definition of $s$-functions. For $r=2$, both $1_{a_1(n_1)=a_2(n_2)}$ and $a_1(n_1)a_2(n_2)$ are $s$-functions of two variables for any two $s$-functions $a_1$ and $a_2$ of one variable. In the following theorem, we will show an analogue of  Corollary~\ref{thm_BR_KR} for $s$-functions of two variables. 

\begin{theorem}\label{mainthm_BR_EI}
	Let  $a: \N^2\to\C$ be an $s$-function. Let $(X, \nu, T)$ be a uniquely ergodic system.  Then we have
	\begin{equation}\label{mainthm_BR_EI_eqn}
		\lim_{N\to\infty}\frac{1}{N}\sum_{n\leq N} a(n,n+1) h(T^{\Omega(n)}x) = \Bigg(\sum_{\substack{f_1,f_2 \in\sF\\ (f_1, f_2)=1}}\frac{a(f_1,f_2)}{f_1 f_2}\prod_{p\mid f_1 f_2}(1-\frac{1}{p})\prod_{p\nmid f_1 f_2}(1-\frac{2}{p^{2}})\Bigg) \Big(\int_X h \,d\nu\Big)
	\end{equation}
	for any $h\in C(X)$ and $x\in X$.
\end{theorem}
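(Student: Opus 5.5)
We will prove Theorem~\ref{mainthm_BR_EI} by reducing it, through the squarefull decomposition, to sums over arithmetic progressions twisted by $h(T^{\Omega(n)}x)$, and then applying the invariant-average property \eqref{inv_property} together with its analogue along residue classes.

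\textbf{Decomposition.} Write $n=q_1f_1$ and $n+1=q_2f_2$ with $f_i=f(n)$, $f(n+1)$. Then $(f_1,f_2)=1$ and $a(n,n+1)=a(f_1,f_2)$. Grouping $n$ by the pair $(f_1,f_2)$ gives
\begin{equation*}
\frac1N\sum_{n\le N}a(n,n+1)h(T^{\Omega(n)}x)=\sum_{\substack{f_1,f_2\in\sF\\(f_1,f_2)=1}}a(f_1,f_2)\,\frac1N\sum_{\substack{n\le N\\ f(n)=f_1,\ f(n+1)=f_2}}h(T^{\Omega(n)}x).
\end{equation*}

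\textbf{Truncation.} First truncate to $f_1,f_2\le Y$ for a fixed large $Y$. The tail is controlled by $\|h\|_\infty$, the bound $a\ll (f_1f_2)^\ve$, and the count
\begin{equation*}
\#\{n\le N: f(n)=f_1\}\ll N/f_1+1.
\end{equation*}
Since $\sum_{f\in\sF} f^{\ve-1}<\infty$, the tail contributes $o_Y(1)$ uniformly in $N$. Here we restrict $f_1\le\sqrt N$ or handle large $f$ crudely using $\#\{f\in\sF: f\le N\}\ll\sqrt N$. It then suffices to fix coprime $f_1,f_2$ and show
\begin{equation*}
\frac1N\sum_{\substack{n\le N\\ f(n)=f_1,\ f(n+1)=f_2}}h(T^{\Omega(n)}x)\;\to\;\delta(f_1,f_2)\int_X h\,d\nu,
\end{equation*}
where $\delta(f_1,f_2)$ is the Erd\H{o}s--Ivi\'c density of this set (the summand in \eqref{dfn_rho_2}, with the corrected constant).

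\textbf{Local conditions.} The condition $f(n)=f_1$, $f(n+1)=f_2$ says three things:
\begin{itemize}
\item $n=f_1m$ with $(m,f_1)=1$ and $m$ squarefree;
\item $f_2\mid n+1$ exactly, with $(n+1)/f_2$ squarefree and coprime to $f_2$;
\item both cofactors are squarefree.
\end{itemize}
Expanding the squarefree conditions by $\mu^2(k)=\sum_{d^2\mid k}\mu(d)$ for both cofactors (as Erd\H{o}s--Ivi\'c do), and truncating $d,e\le Z$, reduces the count to sums of $h(T^{\Omega(n)}x)$ over $n$ in a single residue class modulo some $M=M(f_1,f_2,d,e)$. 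The coprimality conditions are handled by Möbius inversion or a finite sieve over primes dividing $f_1f_2$. The tail $d>Z$ or $e>Z$ is bounded by $\sum_{d>Z}N/d^2$, uniformly.

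\textbf{Key input and main obstacle.} The key input is that for every $M\ge1$ and $r$,
\begin{equation*}
\frac1N\sum_{\substack{n\le N\\ n\equiv r \mod{M}}}h(T^{\Omega(n)}x)\to\frac1M\int_X h\,d\nu.
\end{equation*}
This is the main obstacle, because \eqref{inv_property} only gives dilations, not general residue classes. I would write $g=(r,M)$ and $n=gn'$ with $n'\equiv r/g \mod{M/g}$, $(r/g,M/g)=1$. Since $T^{\Omega(gn')}x=T^{\Omega(n')}(T^{\Omega(g)}x)$, it reduces to reduced classes. For reduced classes, decompose by Dirichlet characters:
\begin{itemize}
\item the principal character contributes the coprime-to-$M$ average, which equals $\prod_{p\mid M}(1-1/p)\int h$ by an inclusion--exclusion using \eqref{inv_property};
\item each nonprincipal $\chi$ requires $\frac1N\sum_{n\le N}\chi(n)h(T^{\Omega(n)}x)\to0$.
\end{itemize}
For the nonprincipal case I would follow Bergelson--Richter's proof of \eqref{BR2022thmA}, which rests on comparing $n$ with $pn$ for primes $p$ in suitable sets (a Tur\'an--Kubilius / Daboussi-type argument). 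Inserting the multiplicative twist $\chi(p)$, the same argument yields cancellation, since $\sum_{p\in I}\chi(p)/p$ is small relative to $\sum_{p\in I}1/p$ on long intervals $I$ by the PNT in arithmetic progressions. If this is too heavy, an alternative is to deduce it from the unique ergodicity of the product system $(X\times\Z/\phi(M), T\times R)$, handling $\chi(n)$ via the equidistribution of $\Omega$ jointly with residue classes as in Bergelson--Richter's framework.

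\textbf{Assembly.} Summing the resulting main terms $\frac1M\int h$ with the Möbius weights reconstructs exactly $\delta(f_1,f_2)\int h$, by the same computation as \eqref{dfn_rho_2}. This uses Lemma~\ref{lem_coprime} to evaluate the Euler products. Letting $Z,Y\to\infty$ finishes the proof.
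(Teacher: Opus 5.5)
Your outline has the same architecture as the paper's proof. You group by $(f(n),f(n+1))$ and truncate. For fixed coprime $f_1,f_2$ you expand the squarefree and coprimality conditions into residue classes, which is exactly what Theorem~\ref{BR_A_f_1_f_2} does. Your ``key input'' is precisely \eqref{BR_pntap}, which the paper does not re-prove but quotes from Bergelson--Richter (their Corollary~1.16), so the step you call the main obstacle is available off the shelf.

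Your own derivation of it is a reasonable sketch. The reduction to reduced classes via $T^{\Omega(gn')}x=T^{\Omega(n')}(T^{\Omega(g)}x)$ and the principal-character part via \eqref{inv_property} are correct. For nonprincipal $\chi$, the twisted Tur\'an--Kubilius argument works provided you group the primes into short multiplicative intervals $(y,(1+\delta)y]$. On each such interval the inner averages over $m\le N/p$ are nearly constant, and you apply the PNT in progressions there. The product-system alternative does not work as stated, since $\chi(n)$ is not a function of $\Omega(n)$.

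One step fails as written: the truncation in $(f_1,f_2)$. You bound $|a(f_1,f_2)|\ll (f_1f_2)^{\ve}$ but count only $\#\{n\le N: f(n)=f_1\}\ll N/f_1+1$. This leaves the factor $f(n+1)^{\ve}$, which can be as large as $N^{\ve}$, so you only get $N^{1+\ve}Y^{-1/2+\ve}$, not a bound that is $o(N)$ uniformly. There are two ways to repair it.
\begin{itemize}
\item First discard the $n$ with $\max(f(n),f(n+1))>N^{1/2}$, at cost $O(N^{3/4+\ve})$ as in the paper. Then use the pair count $\#\{n\le N: f_1\mid n,\ f_2\mid n+1\}\le N/(f_1f_2)+1$, which gives a tail $\ll NY^{-1/2+\ve}+N^{1/2+\ve}$.
\item Alternatively, apply Cauchy--Schwarz together with $\frac1N\sum_{n\le N}f(n)^{2\ve}\ll 1$.
\end{itemize}

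With that fix, your soft version is slightly leaner than the paper's. For fixed $Y$ you never need the uniform error terms of Lemma~\ref{lem_ErdosIvic1987}; you only need the value $\rho_4$ of the density, which your assembly step recomputes via Lemma~\ref{lem_coprime}. Likewise, in the squarefree expansion the tail over $e>Z$ with $d\le Z$ is $N/Z+O(Z\sqrt N)$, not literally $\sum_{d>Z}N/d^2$. This is harmless because you let $N\to\infty$ first.
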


Taking $a(n_1,n_2)= 1_{a_1(n_1)=a_2(n_2)}$ and $a_1(n_1)a_2(n_2)$ in Theorem~\ref{mainthm_BR_EI}, we obtain that the sequences in \eqref{ErdosIvic1987_thm1} and \eqref{ErdosIvic1987_thm2} are asymptotically uncorrelated to the orbits in \eqref{BR2022thmA}.

\begin{corollary} Let $a_1, a_2:\N\to\C$ be  two $s$-functions. Let $(X, \nu, T)$ be a uniquely ergodic system. Then we have
\begin{align}
	&\lim_{N\to\infty}\frac{1}{N}\sum_{\substack{n\leq N\\a_1(n)=a_2(n+1)}}  h(T^{\Omega(n)}x) = \rho_1 \cdot \int_X h \,d\nu, \\
	&\lim_{N\to\infty}\frac{1}{N}\sum_{n\leq N} a_1(n)a_2(n+1) h(T^{\Omega(n)}x) = \rho_2 \cdot \int_X h \,d\nu
\end{align}
for any $h\in C(X)$ and $x\in X$, where $\rho_1$ and $\rho_2$ are defined by \eqref{dfn_rho_1} and \eqref{dfn_rho_2}, respectively.
\end{corollary}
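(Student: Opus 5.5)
The plan is to deduce both limits directly from Theorem~\ref{mainthm_BR_EI}, applied to two specific functions of two variables. Each application needs two checks. First, the chosen function must be an $s$-function on $\N^2$ in the sense of the generalized definition. Second, the constant on the right-hand side of \eqref{mainthm_BR_EI_eqn} must reduce to $\rho_1$ or $\rho_2$ respectively.

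For the first identity, take $a(n_1,n_2)=1_{a_1(n_1)=a_2(n_2)}$. Since $a_1(n_1)=a_1(f(n_1))$ and $a_2(n_2)=a_2(f(n_2))$, the truth of the condition $a_1(n_1)=a_2(n_2)$ depends only on the pair $(f(n_1),f(n_2))$. Hence $a(n_1,n_2)=a(f(n_1),f(n_2))$. The growth condition is trivial because $0\le a\le 1\ll (n_1n_2)^\ve$. With this choice,
\begin{equation*}
\sum_{n\le N} a(n,n+1)\,h(T^{\Omega(n)}x)=\sum_{\substack{n\le N\\ a_1(n)=a_2(n+1)}} h(T^{\Omega(n)}x).
\end{equation*}
In the constant of \eqref{mainthm_BR_EI_eqn}, the factor $a(f_1,f_2)$ equals $1_{a_1(f_1)=a_2(f_2)}$. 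It therefore simply restricts the sum over coprime squarefull pairs to those with $a_1(f_1)=a_2(f_2)$, which is exactly \eqref{dfn_rho_1}.

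For the second identity, take $a(n_1,n_2)=a_1(n_1)a_2(n_2)$. Invariance under passing to squarefull parts is immediate, because each factor has this property. For growth, each factor satisfies $a_i(n_i)\ll_\ve n_i^{\ve}$, so the product is $\ll_\ve (n_1n_2)^{\ve}$. The constant in \eqref{mainthm_BR_EI_eqn} then becomes term-by-term the series \eqref{dfn_rho_2} defining $\rho_2$.

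There is no real obstacle once Theorem~\ref{mainthm_BR_EI} is available. The only point needing a remark is that $a_1,a_2$ are now allowed to be complex-valued, so the indicator $1_{a_1(n_1)=a_2(n_2)}$ should be read as a comparison of complex numbers. This changes nothing in either verification. Absolute convergence of the series for $\rho_1$ and $\rho_2$ is part of the conclusion of Theorem~\ref{mainthm_BR_EI}, since the constant there is a well-defined finite number for any $s$-function of two variables.
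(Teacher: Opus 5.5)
Your proposal is correct and is exactly the paper's argument. The paper also obtains the corollary by specializing Theorem~\ref{mainthm_BR_EI} to $a(n_1,n_2)=1_{a_1(n_1)=a_2(n_2)}$ and to $a(n_1,n_2)=a_1(n_1)a_2(n_2)$; your checks that these are $s$-functions of two variables and that the constant reduces to $\rho_1$, respectively $\rho_2$, are the routine verifications the paper leaves implicit.
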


\begin{remark} 

Let $k\ge2$ be an integer. Then every integer $n\geq 1$ can be written uniquely as $n=q_k f_k$, where $q_k=q_k(n)$ is $k$-free, $f_k=f_k(n)$ is $k$-full and $(q_k, f_k)=1$. An arithmetic function $a\colon \N\to\Z_{\ge0}$ is called a \textit{function with $k$-full kernel}, or simply a \textit{$k$-full kernel functions}, if $a(n) = a(f_k(n))$ for all $n\ge1$ and $a(n)\ll n^\ve$ for any $\ve>0$. Modifying the proofs in Sections~\ref{sec_mainthm_disjoint}-\ref{sec_mainthm_BR_EI_pf} slightly, one can readily generalize the main results in this paper from functions with squarefull kernel to functions with $k$-full kernel for any integer $k\ge2$.
\end{remark}

This paper is organized as follows. In Section~\ref{sec_background}, we will introduce some background material on uniquely ergodic topological dynamical systems. In Section~\ref{sec_mainthm_disjoint}, we first establish a relation between the standard averages and the averages on the $s$-functions. Then we give an elementary proof of Theorem~\ref{mainthm_disjoint}. In Section~\ref{sec_mainthm_disjoint_variant}, we prove a variant of Theorem~\ref{mainthm_disjoint} for natural numbers coprime to nonnegative and integer-valued $s$-functions. In Section~\ref{sec_mainthm_BR_EI_pf}, we use some ideas in the work \cite{ErdosIvic1987} of Erd\H{o}s and Ivi\'c to prove Theorem~\ref{mainthm_BR_EI}. The technique in the proof is similar  to Theorem~\ref{mainthm_disjoint}. In the end, in Section~\ref{sec_mainthm_BR_EI}, we give a new class of arithmetic functions such that their shifted convolutions are asymptotically uncorrelated to the orbits along $\Omega(n)$ in a uniquely ergodic system.

\section{Concepts and notations} \label{sec_background}

\subsection{Big \texorpdfstring{$O$}{Lg}-notation}

Let $D$ be a set. For two functions $f,g\colon D\to\C$ defined on $D$, we write $f= O(g)$ or $f\ll g$, if there exits a positive constant $C$ such that $|f(x)|\leq C |g(x)|$ for all $x\in D$. The implied constant $C$ may depend on some parameters, say $\ve$ and $k$, but it does not depend on the domain $D$. In the proofs of this paper,  $\ve$ is any positive number that may vary from one line to the next. For example, $3\ve$ may be replaced by $\ve$ in the next line, and this does not have any influence on the proofs. We write $f= O_\ve(g)$ or $f\ll_\ve g$, if the implied constant depends on $\ve$.

\subsection{Topological dynamical systems}\label{subsec_tds}

Let $X$ be a compact metric space, and let $T\colon X\to X$ be a continuous map. Then the pair $(X,T)$ is called a \textit{topological dynamical system}. A Borel probability measure $\nu$ on $X$ is called \textit{T-invariant} if $\nu(T^{-1}A) = \nu(A)$ for all measurable subsets $A\subset X$. By the Bogolyubov-Krylov theorem, every topological dynamical system  $(X, T)$ has at least one $T$-invariant measure. If it admits only one $T$-invariant measure $\nu$, then $(X, \nu, T)$ is called \textit{uniquely ergodic}. It is well-known that $(X, \nu, T)$ is uniquely ergodic if and only if 
\begin{equation}\label{eqn_ergodicity}
	\lim_{N \to \infty} \frac1N\sum_{n=1}^N  h(T^n x) = \int_X h \, d\nu
\end{equation}
holds for  all $x \in X$ and $h\in C(X)$. By Bergelson-Richter's theorem~\eqref{BR2022thmA},   \eqref{eqn_ergodicity} also holds if the orbit $\set{T^nx\colon n\in\N}$ is replaced by the orbit $\set{T^{\Omega(n)}x\colon n\in\N}$ along $\Omega(n)$ for every point $x\in X$.

	In \eqref{BR2022thmA}, take $X=\set{0,1},  T\colon x\mapsto x+1\mod2$, $\mu(\set{0})=\mu(\set{1})=1/2$. This system is called a rotation on two points. It is the simplest uniquely ergodic topological dynamical system. If we take $f\colon \set{0,1}\to\R$, $f(0)=1,f(1)=0$ and the initial point $x=0$, then 
	$T^{\Omega(n)}x=\Omega(n)\mod2$, and
	$$\lim_{N\to\infty}\frac1N\sum_{n=1}^Nf(T^{\Omega(n)}x)=\lim_{N\to\infty}\frac1N\#\set{n\le N\colon \Omega(n)\equiv 0\mod 2}=\frac12.$$
This implies that
\begin{equation*}
	\lim_{N\to\infty}\frac{1}{N}\sum_{n=1}^N\lambda(n)=0,
\end{equation*}
which is equivalent to the PNT, where $\lambda(n)=(-1)^{\Omega(n)}$ is the Liouville function. Thus, the PNT can be recovered from Bergelson-Richter's theorem.

\section{Proof of Theorem~\ref{mainthm_disjoint}} \label{sec_mainthm_disjoint}

In this section, we will use some ideas of Montgomery and Vaughan in \cite{MontgomeryVaughan2007} and the second author in \cite{Wang2025jnt} to give
an elementary proof of Theorem~\ref{mainthm_disjoint}. We first cite a lemma in \cite{Wang2025jnt} on the partial sum of arbitrary bounded arithmetic functions over squarefree numbers.
If $S$ is a finite nonempty set, we define $$\BEu{x\in S}c(x)\colonequals\frac1{|S|}\sum_{x\in S}c(x)$$ for any function $c\colon S\to\C$ on $S$.

\begin{lemma}[{\cite[Lemma~4.1]{Wang2025jnt}}]\label{lem_sqfree_coprime}
Let $c\colon \N\to\C$ be a bounded arithmetic function.	For any $y\ge0$ and $f\ge1$, we have
	\begin{equation}\label{eqn_lem_sqfree_coprime}
		\sum_{\substack{n\leq y\\ (n,f)=1}}\mu^2(n)c(n)= y\sum_{h\in \sH(f)}  \frac{\lambda(h)}{h} \sum_{l\leq \sqrt{\frac{y}{h}}}\frac{\mu(l)}{l^2} \BEu{m\leq \frac{y}{hl^2}} c(hl^2m)+O\Bigg(y^{1/2}\prod_{p\mid f}\big(1-p^{-1/2}\big)^{-1}\Bigg),
	\end{equation}
	where $\sH(f)=\set{h\colon p\mid h\Rightarrow p\mid f}$.
\end{lemma}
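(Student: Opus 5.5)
The plan is to prove Lemma~\ref{lem_sqfree_coprime} through an exact convolution identity for the weight $\mu^2(n)1_{(n,f)=1}$. After that, the sum is rearranged and only the rounding errors need to be estimated. The key identity I will establish is
\begin{equation*}
\mu^2(n)1_{(n,f)=1}=\sum_{\substack{h l^2 m=n\\ h\in\sH(f)}}\lambda(h)\mu(l),
\end{equation*}
where the sum runs over all factorizations $n=hl^2m$ with $h\in\sH(f)$ and $l,m\ge1$.

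To verify the identity, both sides are multiplicative in $n$, so it suffices to compare formal Dirichlet series (Euler products).
\begin{itemize}
\item The right-hand side is the product of three series: $\sum_{h\in\sH(f)}\lambda(h)h^{-s}=\prod_{p\mid f}(1+p^{-s})^{-1}$, then $\sum_l\mu(l)l^{-2s}=\zeta(2s)^{-1}$, then $\sum_m m^{-s}=\zeta(s)$.
\item Since $\zeta(s)/\zeta(2s)=\prod_p(1+p^{-s})$, this product equals $\prod_{p\nmid f}(1+p^{-s})$.
\item That is exactly the generating series of squarefree integers coprime to $f$, i.e.\ of the left-hand side.
\end{itemize}
Equivalently, one can check the identity prime by prime on prime powers.

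Next I insert the identity into $\sum_{n\le y,(n,f)=1}\mu^2(n)c(n)$ and interchange the order of summation. This gives
\begin{equation*}
\sum_{\substack{h\in\sH(f)\\ h\le y}}\lambda(h)\sum_{l\le\sqrt{y/h}}\mu(l)\sum_{m\le y/(hl^2)}c(hl^2m).
\end{equation*}
Terms with $h>y$ contribute nothing, since then the $l$-range is empty. For such $h$ the main term in \eqref{eqn_lem_sqfree_coprime} should be read as zero, or the sum there restricted to $h\le y$.

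For the inner sum, put $z=y/(hl^2)\ge1$. Then
\begin{equation*}
\sum_{m\le z}c(hl^2m)=\lfloor z\rfloor\,\BEu{m\le z}c(hl^2m)=z\,\BEu{m\le z}c(hl^2m)+O(1),
\end{equation*}
with the $O(1)$ uniform because $c$ is bounded. Substituting $z\,\BEu{m\le z}$ reproduces exactly the main term $y\sum_h\frac{\lambda(h)}{h}\sum_l\frac{\mu(l)}{l^2}\BEu{m\le y/(hl^2)}c(hl^2m)$.

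It remains to bound the accumulated error. It is
\begin{equation*}
\ll\sum_{\substack{h\in\sH(f)\\ h\le y}}\sqrt{y/h}\le y^{1/2}\sum_{h\in\sH(f)}h^{-1/2}=y^{1/2}\prod_{p\mid f}\big(1-p^{-1/2}\big)^{-1},
\end{equation*}
which is the stated error term. The bound is uniform in $f$ and $y$ because the implied constant depends only on $\sup|c|$.

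The only genuinely nontrivial step is finding and verifying the convolution identity, in particular recognizing that the factor $\lambda(h)$ over $h\in\sH(f)$ is what cancels the local Euler factors at $p\mid f$. Everything after it is routine bookkeeping.
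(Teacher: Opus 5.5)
Your proof is correct and complete. The identity $\mu^2(n)1_{(n,f)=1}=\sum_{hl^2m=n,\,h\in\sH(f)}\lambda(h)\mu(l)$, read off from $\prod_{p\nmid f}(1+p^{-s})=\prod_{p\mid f}(1+p^{-s})^{-1}\zeta(s)/\zeta(2s)$, is followed by replacing $\lfloor y/(hl^2)\rfloor$ by $y/(hl^2)$ at cost $O(1)$ per pair $(h,l)$ and summing $\sum_{h\in\sH(f)}\sqrt{y/h}$. This is precisely the Montgomery--Vaughan-style argument whose structure the statement encodes; the paper does not prove the lemma itself but cites \cite[Lemma~4.1]{Wang2025jnt}. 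One small point: your caveat about $h>y$ is unnecessary, since the $l$-sum in the main term is then empty and those terms vanish automatically.
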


The following lemma gives an upper bound on the numbers in $\sH(q)$ for $q\ge1$. 

\begin{lemma}\label{lem_reciprocical_friable}
	For $q\ge 2$ and $H\ge1$, we have
	\begin{equation}\label{eqn_lem_reciprocical_friable}
		\sum_{\substack{h>H\\ h\in \sH(q)}}\frac1h\ll H^{-\frac1{4\log q}}\log q.
	\end{equation}
\end{lemma}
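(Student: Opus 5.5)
We would prove \eqref{eqn_lem_reciprocical_friable} by Rankin's trick. For any $0<\sigma<1$ and $h>H$ we have $1\le (h/H)^{\sigma}$, so
\begin{equation*}
\sum_{\substack{h>H\\ h\in \sH(q)}}\frac1h\le H^{-\sigma}\sum_{h\in\sH(q)}\frac{1}{h^{1-\sigma}}=H^{-\sigma}\prod_{p\mid q}\Big(1-\frac{1}{p^{1-\sigma}}\Big)^{-1}.
\end{equation*}
The last equality uses the fact that $\sH(q)$ consists exactly of the integers all of whose prime factors divide $q$. The sum therefore factors as an Euler product over the primes $p\mid q$, and it converges because $\sigma<1$. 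We choose $\sigma=\frac{1}{4\log q}$. Since $q\ge2$, this gives $\sigma\le \frac{1}{4\log 2}<1$, so the choice is admissible and already yields the factor $H^{-\frac{1}{4\log q}}$.

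It remains to show $\prod_{p\mid q}(1-p^{\sigma-1})^{-1}\ll \log q$. The key point is $p^{\sigma}=e^{\sigma\log p}\le e^{\sigma\log q}=e^{1/4}$ for every $p\mid q$, so $p^{\sigma-1}\le e^{1/4}/p$. For $p\ge3$ we get $e^{1/4}/p\le e^{1/4}/3<1/2$. For $p=2$ we get $e^{1/4}/2\approx0.642<1$, which is harmless because it contributes a bounded factor. We then compare with $(1-1/p)^{-1}$. For $p\ge 3$,
\begin{equation*}
\Big(1-\frac{e^{1/4}}{p}\Big)^{-1}\Big(1-\frac1p\Big)\le 1+\frac{C}{p^{}}
\end{equation*}
for an absolute constant $C$; more simply, $-\log(1-x)\le x+O(x^2)$ for $x\le 0.65$ gives $\log\prod\le \sum_{p\mid q}\frac{e^{1/4}}{p}+O(1)$. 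Hence the product is at most $\exp\big(e^{1/4}\sum_{p\mid q}\frac1p+O(1)\big)$.

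The main obstacle is getting the power of $\log q$ right. The bound $\sum_{p\mid q}1/p\le \log\log\log q+O(1)$, for $q$ large, gives $\prod\ll (\log\log q)^{e^{1/4}}$, and this is $\ll\log q$. A cruder but sufficient route avoids the refined estimate. We have $\sum_{p\mid q}1/p\le \sum_{p\le P}1/p$, where $P$ is the $\omega(q)$-th prime and $\omega(q)\ll\log q$. Mertens then gives $\sum_{p\mid q}1/p\le\log\log\log q+O(1)$. Exponentiating with the constant $e^{1/4}<2$ yields $(\log\log q)^{2}\ll\log q$, uniformly for $q\ge2$ after absorbing small $q$ into the implied constant. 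Combining the two paragraphs gives \eqref{eqn_lem_reciprocical_friable}.
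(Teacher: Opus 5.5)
Your argument is correct, but it takes a different route from the paper. The paper notes that every $h\in\sH(q)$ satisfies $P^+(h)\le q$, enlarges the sum to all $q$-friable $h>H$, and quotes Ford's Lemma~2.5, which bounds that sum by $\exp\{-\frac{\log H}{4\log q}\}\log q$. You instead apply Rankin's trick directly to $\sH(q)$. The Dirichlet series is then a finite Euler product over $p\mid q$, and you control it through $\sum_{p\mid q}1/p\le\log\log\log q+O(1)$.

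\emph{What the paper's route buys.} It is a one-line citation, but it discards the structure of $\sH(q)$. If one proves the friable bound by the same Rankin argument, the product runs over all $p\le q$. One then needs the finer estimate $\sum_{p\le q}(p^{\sigma}-1)/p\ll\sigma\sum_{p\le q}\frac{\log p}{p}\ll1$ to reach $\log q$. With your crude bound $p^{\sigma}\le e^{1/4}$, summing over all $p\le q$ would only give $(\log q)^{e^{1/4}}$.

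\emph{What your route buys.} It is self-contained. The crude bound $p^\sigma\le e^{1/4}$ suffices precisely because the product runs only over the primes dividing $q$. It also gives the stronger factor $(\log\log q)^{e^{1/4}}$ in place of $\log q$.

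\emph{A presentational point.} Your ``cruder route'' is not an alternative to the estimate $\sum_{p\mid q}1/p\le\log\log\log q+O(1)$; it is a proof of it. To make that step complete, state explicitly that $\omega(q)\le\log q/\log 2$. Hence the $\omega(q)$-th prime satisfies $P\ll\omega(q)\log(\omega(q)+1)\ll\log q\log\log q$. Mertens then gives $\sum_{p\le P}1/p=\log\log P+O(1)\le\log\log\log q+O(1)$ for large $q$, with small $q$ absorbed into the implied constant as you say.
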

\begin{proof} Let $P^+(h)$ be the largest prime factor of $h$. Then by \cite[Lemma~2.5]{Ford2008}, we have
$$\sum_{\substack{h>H\\ h\in \sH(q)}}\frac1h=\sum_{\substack{h>H\\ p\mid h\Rightarrow p\mid q}}\frac1h\leq \sum_{\substack{h>H\\ P^+(h)\leq q}}\frac1h \ll \exp\set{-\frac{\log H}{4\log q}} \log q,$$
which gives the desired bound.
\end{proof}

The following estimate on  squarefull numbers will be frequently used in the proofs. 

\begin{lemma}\label{squarefull_estimate}
	Suppose $\beta>1/2$. Then for $H\ge1$, we have
	\begin{equation}\label{squarefull_estimate_eqn}
		\sum_{\substack{f>H\\ f\in\sF}}\frac1{f^\beta} \ll H^{\frac{1}{2}-\beta} .
	\end{equation}
\end{lemma}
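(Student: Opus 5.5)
The plan is to use the standard counting bound for squarefull numbers and then partial summation. Every squarefull number $f$ can be written uniquely as $f=a^2b^3$ with $b$ squarefree. Hence the counting function satisfies
\begin{equation*}
Q(x)\colonequals\#\{f\le x\colon f\in\sF\}\le\sum_{b\le x^{1/3}}\sqrt{x/b^3}\le x^{1/2}\sum_{b\ge1}b^{-3/2}\ll x^{1/2}
\end{equation*}
for all $x\ge1$. (One could cite the classical asymptotic $Q(x)=\frac{\zeta(3/2)}{\zeta(3)}x^{1/2}+O(x^{1/3})$ instead, but only the upper bound $Q(x)\ll x^{1/2}$ is needed.)

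Next, write the tail as a Stieltjes integral and integrate by parts:
\begin{equation*}
\sum_{\substack{f>H\\ f\in\sF}}\frac1{f^\beta}=\int_{H}^{\infty}t^{-\beta}\,dQ(t)=\lim_{Y\to\infty}Q(Y)Y^{-\beta}-Q(H)H^{-\beta}+\beta\int_H^\infty Q(t)t^{-\beta-1}\,dt.
\end{equation*}
Since $\beta>1/2$, we have $Q(Y)Y^{-\beta}\ll Y^{1/2-\beta}\to0$. The term $-Q(H)H^{-\beta}$ is nonpositive, so it can simply be dropped. For the remaining integral,
\begin{equation*}
\beta\int_H^\infty t^{1/2-\beta-1}\,dt=\frac{\beta}{\beta-1/2}H^{1/2-\beta},
\end{equation*}
which gives \eqref{squarefull_estimate_eqn} with an implied constant depending only on $\beta$.

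As an alternative that avoids Stieltjes integrals, split into dyadic blocks $2^jH<f\le2^{j+1}H$. Each block contributes at most $Q(2^{j+1}H)(2^jH)^{-\beta}\ll (2^jH)^{1/2-\beta}$, and this is a convergent geometric series because $\beta>1/2$.

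No step is a genuine obstacle. The only point needing care is the bound $Q(x)\ll x^{1/2}$, which the $a^2b^3$ representation settles. The implied constant depends on $\beta$ and blows up as $\beta\to1/2^+$. This is consistent with the paper's convention that implied constants may depend on parameters.
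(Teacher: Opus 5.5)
Your argument is correct, but it is organized differently from the paper's. The paper never introduces the counting function. It writes $f=l^2m^3$, enlarges the sum to all pairs $(l,m)$ with $l^2m^3>H$, and for each fixed $m$ bounds the tail $\sum_{l>\sqrt{H/m^3}}l^{-2\beta}\ll (m^3/H)^{\beta-1/2}$. This leaves $H^{1/2-\beta}\sum_{m}m^{-3/2}$. (For $m^3>H$ this tail is all of $\zeta(2\beta)$, and the bound still holds because the right-hand side is then at least $1$; the paper leaves this implicit.)

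You use the same parametrization only once, to get $Q(x)\ll x^{1/2}$. You then turn that counting bound into the weighted tail bound by partial summation or by dyadic blocks. Both versions are carried out correctly, including the sign of the boundary term and the vanishing of $Q(Y)Y^{-\beta}$.

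The paper's route is a single self-contained computation with no integration by parts. Yours separates the arithmetic input from the analytic step. It therefore shows more generally that any set with counting function $\ll x^{\theta}$ satisfies $\sum_{n>H}n^{-\beta}\ll H^{\theta-\beta}$ for $\beta>\theta$. For example, the classical bound $Q_k(x)\ll x^{1/k}$ for $k$-full numbers immediately gives the analogue $H^{1/k-\beta}$ needed for the paper's remark on $k$-full kernel functions. In both proofs the implied constant blows up like $(\beta-1/2)^{-1}$ as $\beta\to 1/2^{+}$.
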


\begin{proof} Since every squarefull integer $f$ is uniquely of the form $l^2m^3$ with $m$ squarefree, we have
\begin{equation}
	\sum_{\substack{f>H\\ f\in\sF}}\frac1{f^\beta} \leq \sum_{l^2m^3>H} \frac{1}{l^{2\beta}m^{3\beta}}=\sum_{m=1}^\infty\frac{1}{m^{3\beta}} \sum_{l > \sqrt{\frac{H}{m^3}}} \frac{1}{l^{2\beta}} \ll \sum_{m=1}^\infty\frac{1}{m^{3\beta}} \cdot \Big(\sqrt{\frac{m^3}{H}}\Big)^{2\beta-1} =H^{\frac{1}{2}-\beta} \sum_{m=1}^\infty\frac{1}{m^{3/2}},
\end{equation}
which is bounded by $O(H^{\frac{1}{2}-\beta})$, as desired.
\end{proof}

Now, we show an asymptotic formula for the partial sum of  $s$-functions twisted by bounded arithmetic functions. 

\begin{proposition}\label{prop_bc_invariant}
Let $b\colon \N\to\C$ be an $s$-function. Let $c\colon  \N\to\C$ be a bounded arithmetic function. For any $N\ge1$, $1\leq F\leq N$, $H\ge1$ and $1\leq L\leq \sqrt{\frac{N}{FH}}$, we have
\begin{multline}\label{eqn_prop_BRD}
		\frac1N\sum_{1\leq n\leq N} b(n)c(n)=\sum_{\substack{f\leq F\\ f\in \sF}}  \frac{b(f)}{f} \sum_{\substack{h\leq H \\ h\in \sH(f)}}  \frac{\lambda(h)}{h} \sum_{l\leq L}\frac{\mu(l)}{l^2} \BEu{m\leq \frac{N}{fhl^2}}c(fhl^2m)\\
	+O\of{L^{-1}}+O\of{H^{-\frac1{4\log F}}}+O\of{ F^{-1/2+\ve}}+O\of{N^{-1/2+\ve}}.
	\end{multline}
The implied constants in the $O$-terms of \eqref{eqn_prop_BRD} depend only on the supnorm of $|c(n)|$.
\end{proposition}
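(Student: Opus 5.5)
We decompose each $n\le N$ uniquely as $n=qf$ with $q\in\sS$, $f\in\sF$, $(q,f)=1$. Since $b(n)=b(f)$, this gives
\[
\sum_{n\le N} b(n)c(n)=\sum_{\substack{f\le N\\ f\in\sF}} b(f)\sum_{\substack{q\le N/f\\ (q,f)=1}}\mu^2(q)\,c(fq).
\]
For each fixed $f$ we apply Lemma~\ref{lem_sqfree_coprime} to the bounded function $c_f(m)\colonequals c(fm)$ with $y=N/f$. The main term is then
\[
\frac{N}{f}\sum_{h\in\sH(f)}\frac{\lambda(h)}{h}\sum_{l\le\sqrt{N/(fh)}}\frac{\mu(l)}{l^2}\BEu{m\le \frac{N}{fhl^2}}c(fhl^2m),
\]
with error $O\big((N/f)^{1/2}\prod_{p\mid f}(1-p^{-1/2})^{-1}\big)$.

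First, we truncate in $f$. For $f>F$ we bound the whole inner sum trivially by $\ll N/f$ and use $b(f)\ll f^{\ve}$. Lemma~\ref{squarefull_estimate} with $\beta=1-\ve$ then gives a contribution $O(F^{-1/2+\ve})$ after dividing by $N$.

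For $f\le F$, the error term of Lemma~\ref{lem_sqfree_coprime} contributes $N^{-1/2}\sum_{f\in\sF,\,f\le F} f^{-1/2+\ve}\prod_{p\mid f}(1-p^{-1/2})^{-1}$. The product is $\ll 2^{\omega(f)}\ll f^{\ve}$, so this is $\le N^{-1/2}\sum_{f\le N,\,f\in\sF} f^{-1/2+\ve}$. Counting squarefull numbers, $\#\{f\le x:\, f\in\sF\}\ll x^{1/2}$, so this sum is $\ll N^{\ve}$ and the contribution is $O(N^{-1/2+\ve})$.

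Next, we truncate in $h$ and $l$ inside the main term for $f\le F$.

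\emph{Tail in $l$.} The tail $L<l\le\sqrt{N/(fh)}$ is bounded by $\sum_{l>L}l^{-2}\ll L^{-1}$, times $\sum_{h\in\sH(f)}1/h=\prod_{p\mid f}(1-1/p)^{-1}\ll\log\log(3f)$. Weighted by $b(f)/f$ and summed over $f\in\sF$, this gives a convergent sum, so the total is $O(L^{-1})$. The condition $L\le\sqrt{N/(FH)}$ ensures that for $f\le F$ and $h\le H$ the truncation at $L$ really cuts the range $l\le\sqrt{N/(fh)}$. For $h>H$ the $l$-sum is absorbed by the $h$-tail below.

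\emph{Tail in $h$.} Since $\sH(f)=\sH(\mathrm{rad} f)$, Lemma~\ref{lem_reciprocical_friable} with $q=f\le F$ gives $\sum_{h>H,\,h\in\sH(f)}1/h\ll H^{-1/(4\log F)}\log F$. Here I expect the main obstacle: the factor $\log F$ and the sum over $f$ of $|b(f)|/f$ must be absorbed into the stated $O(H^{-1/(4\log F)})$. I would try using $q=f$ directly and noting $\log f\ll f^{\ve}$, so that $\sum_{f\in\sF}|b(f)|f^{-1+\ve}$ converges. If a stray $\log F$ remains, I would refine the lemma: use $P^+(h)\le P^+(f)$ and split according to the size of $f$, or replace $F$ by a slightly larger parameter in the exponent. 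The bound $H^{-1/(4\log f)}\le H^{-1/(4\log F)}$ for $f\le F$ is what makes the exponent uniform in $f$.

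Finally, we divide by $N$. All implied constants come only from $\sup|c|$, the bound $b(f)\ll f^{\ve}$, and the lemmas, so the claimed dependence holds.
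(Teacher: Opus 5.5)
Your proposal is correct and takes essentially the same route as the paper. You decompose $n=fq$, apply Lemma~\ref{lem_sqfree_coprime} to each inner sum, and truncate in $f$, $h$, $l$ via Lemmas~\ref{squarefull_estimate} and~\ref{lem_reciprocical_friable}. The only cosmetic difference is that you cut off $f>F$ with the trivial bound $N/f$ before expanding, whereas the paper expands for all $f\le N$ first.

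Your worry about a stray $\log F$ is resolved by your first suggestion, which is exactly what the paper does. For $f\le F$ one has $H^{-1/(4\log f)}\log f\le H^{-1/(4\log F)}\log f$, and $\sum_{f\in\sF}|b(f)|\log f/f$ converges, so no factor $\log F$ survives.
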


\begin{proof}
	Since every number $n$ can be uniquely written as $n=fq$ with $(f,q)=1$, $f\in\sF$ and $q\in \sS$, we break the summation into two components:
\begin{equation}\label{eqn_two_components}
	\sum_{n\leq N} b(n)c(n)=\sum_{\substack{f\leq N\\ f\in \sF}}  b(f) \sum_{\substack{q\leq N/f \\q\in \sS \\ (q,f)=1}}c(fq).
\end{equation}

For the inner summation, notice that $q\in \sS$ if and only if $\mu^2(q)=1$. By Lemma~\ref{lem_sqfree_coprime}, we have
\begin{equation}\label{eqn_mainthm_aym_second_term}
	\sum_{\substack{q\leq N/f \\q\in \sS \\ (q,f)=1}}c(fq)= \frac{N}{f}\sum_{h\in \sH(f)}  \frac{\lambda(h)}{h} \sum_{l\leq \sqrt{\frac{N}{fh}}}\frac{\mu(l)}{l^2} \BEu{m\leq \frac{N}{fhl^2}}c(fhl^2m)+O\Big(N^{1/2}f^{-1/2}\prod_{p\mid f}\big(1-p^{-1/2}\big)^{-1}\Big).
\end{equation}

Plug \eqref{eqn_mainthm_aym_second_term} into \eqref{eqn_two_components}. For the error term, by $b(f)\ll f^{\ve}\leq N^{\ve}$ for  any $\ve>0$ and $\big(1-p^{-1/2}\big)^{-1}\leq \big(1-2^{-1/2}\big)^{-1}<4$ for any $p\ge2$, we have
\begin{equation}\label{est b f neg half sum}
	\sum_{\substack{f\leq N\\ f\in \sF}}b(f)N^{1/2}f^{-1/2} \prod_{p\mid f}\big(1-p^{-1/2}\big)^{-1} \ll N^{1/2+\ve}\sum_{\substack{f\leq N\\ f\in \sF}}4^{\omega(f)}f^{-1/2} \ll N^{1/2+\ve}.
\end{equation}
This implies that
\begin{equation}\label{eqn_main_error}
		\frac1N\sum_{1\leq n\leq N} b(n)c(n)=\sum_{\substack{f\leq N\\ f\in \sF}}  \frac{b(f)}{f}\sum_{h\in \sH(f)}  \frac{\lambda(h)}{h} \sum_{l\leq \sqrt{\frac{N}{fh}}}\frac{\mu(l)}{l^2} \BEu{m\leq \frac{N}{fhl^2}}c(fhl^2m)+O\of{N^{-1/2+\ve}}.
\end{equation}

Now, we analyze the main term of \eqref{eqn_main_error}. We cut it off into three parts step by step as follows. Firstly, for any $1\leq F\leq N$, we have
    \begin{align}
	&\quad\Big|\sum_{\substack{F<f\leq N\\ f\in \sF}}  \frac{b(f)}{f}\sum_{h\in \sH(f)}  \frac{\lambda(h)}{h} \sum_{l\leq \sqrt{\frac{N}{fh}}}\frac{\mu(l)}{l^2} \BEu{m\leq \frac{N}{fhl^2}}c(fhl^2m)\Big| \nonumber\\
	&\leq \sum_{\substack{F<f\leq N\\ f\in \sF}}  \frac{|b(f)|}{f} \sum_{h\in \sH(f)}  \frac{1}{h} \sum_{l\leq \sqrt{\frac{N}{fh}}}\frac{|\mu(l)|}{l^2} \Big|\BEu{m\leq \frac{N}{fhl^2}}c(fhl^2m)\Big|  \nonumber \\
	&\ll \sum_{\substack{f>F\\ f\in \sF}} \frac{f^{\ve}}f \sum_{h\in \sH(f)}  \frac{1}{h} = \sum_{\substack{f>F\\ f\in \sF}} \frac1{f^{1-\ve}} \prod_{p\mid f} \big(1+\frac1p+\frac1{p^2}+\cdots\big)  \nonumber\\
	&=\sum_{\substack{f>F\\ f\in \sF}} \frac1{f^{1-\ve}} \prod_{p\mid f} \big(1+\frac1p+O(\frac1{p^2})\big) \ll \sum_{\substack{f>F\\ f\in \sF}} \frac1{f^{1-\ve}} \prod_{p\mid f} \big(1+\frac1p\big).        \label{eqn_mainthm_asy_pf_err2}
    \end{align}

By the fact that 
\begin{equation}\label{a fact on prod p mid f}
    \prod_{p\mid f} \big(1+\frac1p\big) \ll \log\log f \ll f^{\ve}
\end{equation}
and Lemma~\ref{squarefull_estimate}, we  get that
\begin{equation}\label{eqn_mainthm_asy_pf_err1}
		\sum_{\substack{f>F\\ f\in \sF}} \frac1{f^{1-\ve}}  \prod_{p\mid f} \big(1+\frac1p\big) \ll 	\sum_{\substack{f>F\\ f\in \sF}} \frac{f^\ve}{f^{1-\ve}}\ll F^{-1/2+2\ve}.
\end{equation}
	This implies that
\begin{multline}
	\frac1N\sum_{1\leq n\leq N} b(n)c(n)=\sum_{\substack{f\leq F\\ f\in \sF}}  \frac{b(f)}{f}\sum_{h\in \sH(f)}  \frac{\lambda(h)}{h} \sum_{l\leq \sqrt{\frac{N}{fh}}}\frac{\mu(l)}{l^2} \BEu{m\leq \frac{N}{fhl^2}}c(fhl^2m)\\
	+O\of{ F^{-1/2+\ve}}+O\of{N^{-1/2+\ve}}.
	\end{multline}

Secondly, for any $H\ge1$, by Lemma~\ref{lem_reciprocical_friable} we have
\begin{align}
		&\quad \Big|\sum_{\substack{f\leq F\\ f\in \sF}}  \frac{b(f)}{f}\sum_{\substack{h>H \\ h\in \sH(f)}}  \frac{\lambda(h)}{h} \sum_{l\leq \sqrt{\frac{N}{fh}}}\frac{\mu(l)}{l^2} \BEu{m\leq \frac{N}{fhl^2}}c(fhl^2m)\Big| \nonumber\\
		&\ll \sum_{\substack{f\leq F\\ f\in \sF}}  \frac{|b(f)|}{f} \sum_{\substack{h>H \\ h\in \sH(f)}}  \frac{1}{h}\ll \sum_{\substack{f\leq F\\ f\in \sF}}  \frac{|b(f)|}{f} H^{-\frac1{4\log f}}\log f \nonumber\\
		&\leq H^{-\frac1{4\log F}}\sum_{\substack{f\leq F\\ f\in \sF}}   \frac{|b(f)|\log f}{f} \ll H^{-\frac1{4\log F}}.
\end{align}
The last estimation is due to that $\sum_{\substack{f\leq F\\ f\in \sF}}   \frac{b(f)\log f}{f}$ is convergent. Then it follows that
\begin{multline}
	\frac1N\sum_{1\leq n\leq N} b(n)c(n)=\sum_{\substack{f\leq F\\ f\in \sF}}  \frac{b(f)}{f} \sum_{\substack{h\leq H \\ h\in \sH(f)}}  \frac{\lambda(h)}{h} \sum_{l\leq \sqrt{\frac{N}{fh}}}\frac{\mu(l)}{l^2} \BEu{m\leq \frac{N}{fhl^2}}c(fhl^2m)\\
	+O\of{H^{-\frac1{4\log F}}}+O\of{F^{-1/2+\ve}}+O\of{N^{-1/2+\ve}}.
\end{multline}

Thirdly, for any $1\leq L\leq \sqrt{\frac{N}{FH}}$, by the fact that
\begin{equation}
	\sum_{l>L}\frac{|\mu(l)|}{l^2} \ll \frac1{L},
\end{equation}
we have 
\begin{multline}\label{pf_mainthm_BR_RD_error_terms_copy}
	\frac1N\sum_{1\leq n\leq N} b(n)c(n)=\sum_{\substack{f\leq F\\ f\in \sF}}  \frac{b(f)}{f} \sum_{\substack{h\leq H \\ h\in \sH(f)}}  \frac{\lambda(h)}{h} \sum_{l\leq L}\frac{\mu(l)}{l^2} \BEu{m\leq \frac{N}{fhl^2}}c(fhl^2m)\\
	+O\of{L^{-1}}+O\of{H^{-\frac1{4\log F}}}	+O\of{ F^{-1/2+\ve}}+O\of{N^{-1/2+\ve}}.
\end{multline}
This completes the proof of Proposition~\ref{prop_bc_invariant}.
\end{proof}

\begin{theorem}\label{mainthm_sfcn}
Let $b\colon \N\to\C$ be an $s$-function. Then we have
	\begin{equation}
		\sum_{1\leq n\leq N} b(n) = \alpha \cdot N+ O_\ve(N^{1/2+\ve})
	\end{equation}
for any $\ve>0$, where $\alpha$ is defined by \eqref{dfn_alpha}, i.e.,
\begin{equation*}
\alpha=\frac6{\pi^2}\sum_{\substack{f\in \sF }} \frac{b(f)}{f} \prod_{p\mid f} \big(1+\frac1p\big)^{-1}.
\end{equation*}
\end{theorem}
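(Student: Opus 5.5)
Since $b(n)=b(f(n))$, I will begin with the same decomposition as in \eqref{eqn_two_components}, taking $c\equiv 1$:
\[
\sum_{n\le N} b(n)=\sum_{\substack{f\le N\\ f\in\sF}} b(f)\,Q_f(N/f),
\qquad
Q_f(y)\colonequals\sum_{\substack{q\le y\\ (q,f)=1}}\mu^2(q).
\]
Everything then rests on a uniform asymptotic for $Q_f(y)$. One could specialize Lemma~\ref{lem_sqfree_coprime} with $c\equiv1$. For then $\BEu{m\le y/(hl^2)}1=1$ whenever $hl^2\le y$, so the main term becomes $y\sum_{h\in\sH(f)}\lambda(h)h^{-1}\sum_{l\le\sqrt{y/h}}\mu(l)l^{-2}$. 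The catch is that for $h$ close to $y$ the truncation in $l$ is very short, so the cutoff losses are not obviously $O(y^{1/2+\ve})$ uniformly. The cleaner route is to prove directly that
\[
Q_f(y)=\frac{6}{\pi^2}\,y\prod_{p\mid f}\Big(1+\frac1p\Big)^{-1}+O\big(y^{1/2}\,2^{\omega(f)}\big).
\]
To do this I will write $1_{(q,f)=1}=\sum_{d\mid (q,f)}\mu(d)$ and $\mu^2(q)=\sum_{l^2\mid q}\mu(l)$. Alternatively, I can use the Dirichlet series of $\mu^2(q)1_{(q,f)=1}$, which is $\frac{\zeta(s)}{\zeta(2s)}\prod_{p\mid f}(1+p^{-s})^{-1}$. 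Hence this function equals $\mu^2 * g_f$, where $g_f$ is multiplicative and supported on $\sH(f)$ with $g_f(p^k)=(-1)^k$.

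From this convolution, $Q_f(y)=\sum_{h\in\sH(f)}\lambda(h)\,S(y/h)$, where $S(x)=\sum_{q\le x}\mu^2(q)=\frac6{\pi^2}x+O(\sqrt{x})$. The main term is then
\[
\frac6{\pi^2}\,y\sum_{h\in\sH(f)}\frac{\lambda(h)}{h}=\frac6{\pi^2}\,y\prod_{p\mid f}\Big(1+\frac1p\Big)^{-1},
\]
but the tail $h>y$ has to be removed, since $S(x)=0$ for $x<1$. That tail contributes $y\sum_{h>y,\,h\in\sH(f)}h^{-1}$. I will bound it by Rankin's trick: it is at most $y^{1/2}\sum_{h\in\sH(f)}h^{-1/2}=y^{1/2}\prod_{p\mid f}(1-p^{-1/2})^{-1}\le y^{1/2}4^{\omega(f)}$. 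The error term $\sum_{h\le y}\sqrt{y/h}$ admits the same bound. This gives
\[
Q_f(y)=\frac6{\pi^2}\,y\prod_{p\mid f}\Big(1+\frac1p\Big)^{-1}+O\big(y^{1/2}4^{\omega(f)}\big).
\]

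Substituting back, the error is $\sum_{f\le N,\,f\in\sF}|b(f)|(N/f)^{1/2}4^{\omega(f)}$. Using $|b(f)|4^{\omega(f)}\ll N^{\ve}$ and $\sum_{f\le N,\,f\in\sF}f^{-1/2}\ll\log N$ (the squarefull numbers up to $N$ number $O(\sqrt N)$), this is $\ll N^{1/2+\ve}$, exactly as in \eqref{est b f neg half sum}. The main term is $\frac6{\pi^2}N\sum_{f\le N,\,f\in\sF}\frac{b(f)}{f}\prod_{p\mid f}(1+1/p)^{-1}$. Completing it to the full series costs $N\sum_{f>N,\,f\in\sF}f^{-1+\ve}\ll N^{1/2+\ve}$ by Lemma~\ref{squarefull_estimate} with $\beta=1-\ve$, which proves the theorem.

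The main obstacle is the uniformity in $f$ of the squarefree-coprime count: the error must be $y^{1/2}$ times something like $C^{\omega(f)}\ll f^{\ve}$, and it must not be of size $y^{1/2}$ times a $\log$-power that fails to be summable. The Rankin bound above handles this. The identity $\sum_{h\in\sH(f)}\lambda(h)/h=\prod_{p\mid f}(1+1/p)^{-1}$ is immediate from the Euler product $\prod_{p\mid f}(1-1/p+1/p^2-\cdots)$.
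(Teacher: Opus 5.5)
Your proposal is correct and is essentially the paper's argument. Both proofs decompose $n=fq$, expand the coprime squarefree count through $\lambda$ supported on $\sH(f)$, bound every cutoff loss by $y^{1/2}\sum_{h\in\sH(f)}h^{-1/2}=y^{1/2}\prod_{p\mid f}(1-p^{-1/2})^{-1}$, and complete the $f$-series with Lemma~\ref{squarefull_estimate}; the only difference is that you quote the classical count $\sum_{m\le x}\mu^2(m)=\frac{6}{\pi^2}x+O(\sqrt{x})$, whereas the paper keeps the $l$-sum from Lemma~\ref{lem_sqfree_coprime}. The ``catch'' you anticipated with that lemma does not arise: $\sum_{l\le\sqrt{N/(fh)}}\mu(l)l^{-2}=\frac{6}{\pi^2}+O(\sqrt{fh/N})$ holds uniformly in $h$, trivially when the sum is empty, and this gives exactly your Rankin-type bound.
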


\begin{proof}
By taking $c(n)=1$ in \eqref{eqn_main_error}, we have
\begin{equation}\label{eqn_main_error_2}
		\sum_{1\leq n\leq N} b(n)=N\sum_{\substack{f\leq N\\ f\in \sF}}  \frac{b(f)}{f}\sum_{h\in \sH(f)}  \frac{\lambda(h)}{h} \sum_{l\leq \sqrt{\frac{N}{fh}}}\frac{\mu(l)}{l^2} +O\of{N^{1/2+\ve}}.
\end{equation}
Since $$\sum_{l\leq \sqrt{\frac{N}{fh}}}\frac{\mu(l)}{l^2} =\sum_{l=1}^\infty\frac{\mu(l)}{l^2} - \sum_{l> \sqrt{\frac{N}{fh}}}\frac{\mu(l)}{l^2}=\frac{6}{\pi^2} +O\Big(\sqrt{\frac{fh}{N}}\Big). $$
Plugging it into \eqref{eqn_main_error_2} gives
\begin{equation}\label{eqn_main_error_3}
		\sum_{1\leq n\leq N} b(n)=N\cdot \frac{6}{\pi^2}\sum_{\substack{f\leq N\\ f\in \sF}}  \frac{b(f)}{f}\sum_{h\in \sH(f)}  \frac{\lambda(h)}{h} +O\Bigg(N^{1/2}\sum_{\substack{f\leq N\\ f\in \sF}}  \frac{|b(f)|}{f^{1/2}}\sum_{h\in \sH(f)}  \frac{1}{h^{1/2}}\Bigg) +O\of{N^{1/2+\ve}}.
\end{equation}

Moreover, by \eqref{est b f neg half sum} we can see that
\begin{equation}
	\sum_{\substack{f\leq N\\ f\in \sF}}  \frac{|b(f)|}{f^{1/2}}\sum_{h\in \sH(f)}  \frac{1}{h^{1/2}}=\sum_{\substack{f\leq N\\ f\in \sF}}  \frac{|b(f)|}{f^{1/2}}\prod_{p\mid f}\big(1+\frac1{p^{1/2}}+\frac1p+\cdots\big)\ll N^\ve.
\end{equation}
It follows from \eqref{eqn_main_error_3} that
\begin{equation}\label{eqn_main_error_4}
		\sum_{1\leq n\leq N} b(n)=N\cdot \frac{6}{\pi^2}\sum_{\substack{f\leq N\\ f\in \sF}}  \frac{b(f)}{f} \prod_{p\mid f} \big(1+\frac1p\big)^{-1} + O\of{N^{1/2+\ve}}.
\end{equation}

Then, by Lemma~\ref{squarefull_estimate} we have
\begin{equation}
	\sum_{\substack{f> N\\ f\in \sF}}  \frac{b(f)}{f} \prod_{p\mid f} \big(1+\frac1p\big)^{-1} \ll \sum_{\substack{f> N\\ f\in \sF}}  \frac{|b(f)|2^{\omega(f)}}{f} \ll \sum_{\substack{f> N\\ f\in \sF}}  \frac{1}{f^{1-\ve}} \ll \frac1{N^{1/2-\ve}}.
\end{equation}
Thus, we get 
\begin{equation}
		\sum_{1\leq n\leq N} b(n)=N\cdot \frac{6}{\pi^2}\sum_{f\in \sF}  \frac{b(f)}{f} \prod_{p\mid f} \big(1+\frac1p\big)^{-1} + O_\ve\of{N^{1/2+\ve}}.
\end{equation}
This completes the proof.
\end{proof}

\begin{proof}[Proof of Theorem~\ref{mainthm_disjoint}]

Fix $L, H, F$ in Proposition~\ref{prop_bc_invariant}. For any $f\leq F, h\leq H, l\leq L$, we have
\begin{equation}
	\lim_{N\to\infty}\BEu{m\leq \frac{N}{fhl^2}} c(fhl^2m)= A.
\end{equation}
Taking $N\to\infty$ in \eqref{eqn_prop_BRD} gives
\begin{multline}\label{mainthm_disjoint_pf}
	\lim_{N\to\infty}\frac1N\sum_{1\leq n \leq N} b(n)c(n)= A\cdot \sum_{\substack{f\leq F\\ f\in \sF }} \frac{b(f)}{f} \sum_{\substack{h\leq H\\h\in \sH(f)}}  \frac{\lambda(h)}{h} \sum_{l\leq L}\frac{\mu(l)}{l^2}  \\ +O(L^{-1})+O\of{H^{-\frac{1}{4\log F}}}+ O(F^{-1/2+\ve}).
\end{multline}
Taking $H, F, L\to\infty$ in \eqref{mainthm_disjoint_pf}, respectively, we get that
\begin{equation}
	\lim_{N\to\infty}\frac1N\sum_{1\leq n \leq N} b(n)c(n)= A\cdot \frac6{\pi^2}\sum_{\substack{f\in \sF }} \frac{b(f)}{f} \prod_{p\mid f} \big(1+\frac1p\big)^{-1},
\end{equation}
which is equal to $\alpha\cdot A$ by Theorem~\ref{mainthm_sfcn}.
\end{proof}

\section{A variant of Theorem~\ref{mainthm_disjoint}} \label{sec_mainthm_disjoint_variant}

In 2022, Ding \cite{Ding2022} studied the counting function for numbers $n$ satisfying the property $(n,\Omega(n)-\omega(n))=1$. Based on some ideas of R\'enyi \cite{Renyi1955} and Montgomery-Vaughan \cite{MontgomeryVaughan2007}, he showed the following asymptotic formula
\begin{equation}\label{Ding2022}
	\sum_{\substack{n\leq N \\ (n,\Omega(n)-\omega(n))=1}} 1 = \rho_3 \cdot N + O(N^{\frac12}\log^{\frac43}N),
\end{equation}
where
\[
 \rho_3 = \frac6{\pi^2} \sum_{d=1}^\infty\frac{\mu(d)}{\sigma(d)} \sum_{\substack{f>1 \\ f\in\sF \\ d\mid \Omega(f)-\omega(f)}} \frac{(d,f)}{\sigma_f} \prod_{p\mid (d,f)} (1+\frac1p).
\]
Here, $\sigma(n)=\sum_{d\mid n}d$ and $\sigma_f=f\prod_{p\mid f}(1+\frac1p)$.

Inspired by his work, we study the numbers such that  $(n, a(n))=1$ for an $s$-function $a(n)$. For example, one may take $a(n)$ to be $\Omega(n)-\omega(n)$ or the number of nonisomorphic abelian groups of order $n$. In the following statement, we establish a variant of Theorem~\ref{mainthm_disjoint} for numbers coprime to $s$-functions.

\begin{theorem}\label{mainthm_dyn}
	Let $a\colon\N\to\Z_{\ge0}, b\colon\N\to\C$ be two $s$-functions, $a(n)\ge1$ for $n\ge2$, and let $c(n)$ be a bounded arithmetic function of invariant average $A$ under multiplications. Then we have
	 	\begin{equation}\label{mainthm_dyn_eqn}
		\lim_{N\to\infty}\frac1N\sum_{\substack{1\leq n\leq N\\ (n,a(n))=1}} b(n) c(n) = \rho \cdot A,
	\end{equation}
	where 
	\begin{equation}\label{dfn_rho}
	\rho = \frac6{\pi^2} \sum_{d=1}^\infty\frac{\mu(d)}{\sigma(d)} \sum_{\substack{ f>1_{a(1)=0}\\f\in\sF\\ d\mid a(f)}} \frac{(d,f)b(f)}{\sigma_f} \prod_{p\mid (d,f)} (1+\frac1p).
\end{equation}
\end{theorem}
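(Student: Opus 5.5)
We will reduce Theorem~\ref{mainthm_dyn} to the machinery of Proposition~\ref{prop_bc_invariant}, encoding the coprimality condition by M\"obius inversion. Write $n=fq$ with $f\in\sF$, $q\in\sS$, $(f,q)=1$. Then $a(n)=a(f)$ and $b(n)=b(f)$, so
\[
\sum_{\substack{n\leq N\\ (n,a(n))=1}} b(n)c(n)=\sum_{\substack{f\leq N\\ f\in\sF}} b(f)\sum_{d\mid a(f)}\mu(d)\sum_{\substack{q\leq N/f,\ q\in\sS\\ (q,f)=1,\ d\mid fq}} c(fq),
\]
where $(n,a(n))=1$ has been written as $\sum_{d\mid (n,a(f))}\mu(d)$. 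If $a(1)=0$, the term $f=1$ forces $n=q$ and $(q,0)=1$, so only $q=1$ survives. That single term contributes $O(1)$, which explains the constraint $f>1_{a(1)=0}$. For $f\ge2$ we have $a(f)\ge1$, so $d$ ranges over the finitely many divisors of $a(f)$, and $d\le a(f)\ll f^\ve$.

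Next we handle the condition $d\mid fq$. Put $g=d/(d,f)$. Because $q$ is squarefree, $g\mid q$ forces $g$ squarefree, and we need $(g,f)=1$ since $(q,f)=1$. (If $g$ is not squarefree or shares a prime with $f$, the inner sum is empty.) Writing $q=gq'$ with $q'$ squarefree and $(q',gf)=1$, the inner sum becomes
\[
\sum_{\substack{q'\leq N/(fg)\\ (q',fg)=1}}\mu^2(q')\,c(fgq').
\]
This has exactly the shape treated by Lemma~\ref{lem_sqfree_coprime}, with the modulus $fg$ in place of $f$ and with the bounded function $c(fg\,\cdot)$. It yields main terms
\[
\frac{N}{fg}\sum_{h\in\sH(fg)}\frac{\lambda(h)}{h}\sum_{l}\frac{\mu(l)}{l^2}\,\BEu{m} c(fghl^2m),
\]
with error $O\big((N/fg)^{1/2}\prod_{p\mid fg}(1-p^{-1/2})^{-1}\big)$.

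We then truncate as in the proof of Proposition~\ref{prop_bc_invariant}: cut at $f\le F$, $h\le H$, $l\le L$, and let $N\to\infty$ using the invariant average $\BEu{m\le M}c(km)\to A$. The tails are controlled as before. Since $d\mid a(f)$, we have $g\le d\ll f^\ve$, so the number of pairs $(d,g)$ per $f$ is $\ll f^\ve$, and $1/g\le1$. The sums over $f>F$ are therefore still $\ll F^{-1/2+\ve}$ by Lemma~\ref{squarefull_estimate}, and Lemma~\ref{lem_reciprocical_friable} applies with $q=fg\le F^{1+\ve}$. The $N$-error terms total $\ll N^{1/2+\ve}$. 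Letting $H,L,F\to\infty$ gives the limit
\[
A\cdot\frac6{\pi^2}\sum_{f}\frac{b(f)}{f}\sum_{d\mid a(f)}\mu(d)\frac{1_{g\text{ sqfree},(g,f)=1}}{g}\prod_{p\mid fg}\Big(1+\frac1p\Big)^{-1}.
\]
The same computation with $c\equiv1$ (compare Theorem~\ref{mainthm_sfcn}) shows that this constant is also the density of the $b$-weighted set, so the answer has the form $\rho\cdot A$.

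The remaining, and main, step is verifying that this constant equals $\rho$ in \eqref{dfn_rho}. We interchange the $d$ and $f$ sums to match the displayed form. Since $\mu(d)\neq0$ means $d$ is squarefree, $g=d/(d,f)$ is automatically squarefree and coprime to $f$. Hence
\[
\frac1{fg}\prod_{p\mid fg}\Big(1+\frac1p\Big)^{-1}=\frac{(d,f)}{d\,\sigma_f}\prod_{p\mid g}\Big(1+\frac1p\Big)^{-1}=\frac{(d,f)}{\sigma_f\,\sigma(d)}\prod_{p\mid (d,f)}\Big(1+\frac1p\Big),
\]
using $\sigma(d)=d\prod_{p\mid d}(1+1/p)$ for squarefree $d$. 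This is exactly the summand of $\rho$.

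Justifying the interchange of $d$ and $f$ needs absolute convergence. Bounding $\sum_{f}|b(f)|\tau(a(f))f^{-1+\ve}<\infty$ (using $\tau(a(f))\ll f^\ve$) gives absolute convergence of the $f$-outer form. The $d$-outer form in \eqref{dfn_rho} is then just a rearrangement and converges to the same value.
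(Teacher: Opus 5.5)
Your proposal is correct and is essentially the paper's argument. It uses the same M\"obius inversion of $(n,a(n))=1$, the same reduction of $d\mid fq$ to $d/(d,f)\mid q$, Lemma~\ref{lem_sqfree_coprime} with modulus $df/(d,f)$, truncation in $f,h,l$, the invariant average, and the same evaluation of the constant as $\rho$.

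The only differences are organizational. Keeping $f$ outermost lets $d$ run over the finitely many divisors of $a(f)$, so you avoid the paper's $D$-truncation and its $O(D^{-10})$ estimate, at the cost of a final interchange of the $d$- and $f$-sums that your absolute-convergence bound correctly justifies. You also treat squarefree $n$ (the term $f=1$ when $a(1)\ge 1$) inside the same computation, rather than splitting it off and quoting an external theorem as the paper does.
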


In this section, we follow the approach in Section~\ref{sec_mainthm_disjoint} to prove Theorem~\ref{mainthm_dyn}. Similar to Proposition~\ref{prop_bc_invariant}, we first establish a relation between the averages over the numbers $n$ such that $(n,a(n))=1$ and the standard averages, where $a(n)$ is an $s$-function.

\begin{proposition}\label{prop_BRD}
Let $a\colon \N\to\Z_{\ge0}, b\colon \N\to\C$ be two $s$-functions, and $a(n)\ge1$ for $n\ge2$. Let $c\colon \N\to\C$ be a bounded arithmetic function. For any $N\ge1$, $1\leq D,F\leq N$, $H\ge1$ and $1\leq K\leq \sqrt{\frac{N}{DFH}}$, we have
\begin{multline}\label{eqn_prop_BRD_coprime}
	\frac1N\sum_{\substack{1\leq n \leq N\\ n\notin \sS\\(n,a(n))=1}} b(n)c(n)= \sum_{d\leq D}  \mu(d) \sum_{\substack{1<f\leq F\\ f\in \sF \\ d\mid a(f)}} \frac{(d,f)b(f)}{df} \sum_{\substack{h\leq H\\h\in \sH(\frac{df}{(d,f)})}}  \frac{\lambda(h)}{h} \sum_{k\leq K}\frac{\mu(k)}{k^2}  \BEu{m\leq \frac{N(d,f)}{dfhk^2}} c(\frac{dfhk^2}{(d,f)}m) \\  +O(K^{-1})+O\of{H^{-\frac{1}{4\log(DF)}}}+ O(F^{-1/2+\ve}) + O(D^{-10})+  O(N^{-1/2+\ve}).
\end{multline}
The implied constants in the $O$-terms of \eqref{eqn_prop_BRD_coprime} depend only on the supnorm of $|c(n)|$. 
\end{proposition}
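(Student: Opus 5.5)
We follow the proof of Proposition~\ref{prop_bc_invariant}, with an extra Möbius inversion to detect the condition $(n,a(n))=1$. Write $n=fq$ with $f\in\sF$, $q\in\sS$ and $(q,f)=1$. Since $n\notin\sS$ we have $f>1$, so $a(n)=a(f)\ge1$ and the condition is $(fq,a(f))=1$. We expand it as
\[
1_{(fq,a(f))=1}=\sum_{d\mid (fq,a(f))}\mu(d).
\]
Then
\[
\sum_{\substack{n\le N\\ n\notin\sS\\ (n,a(n))=1}} b(n)c(n)=\sum_{\substack{1<f\le N\\ f\in\sF}} b(f)\sum_{d\mid a(f)}\mu(d)\sum_{\substack{q\le N/f,\ q\in\sS\\ (q,f)=1,\ d\mid fq}} c(fq).
\]
Only squarefree $d$ contribute. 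The condition $d\mid fq$ means $d'\mid q$, where $d'=d/(d,f)$; note $d'$ is coprime to $f$, since $d$ is squarefree. Writing $q=d'q'$, the inner sum becomes a sum over squarefree $q'\le N(d,f)/(df)$ with $(q',df/(d,f))=1$ of $c(\frac{df}{(d,f)}q')$. Lemma~\ref{lem_sqfree_coprime} applied with modulus $df/(d,f)$ gives the main term
\[
\frac{N(d,f)}{df}\sum_{h\in\sH(\frac{df}{(d,f)})}\frac{\lambda(h)}{h}\sum_{k}\frac{\mu(k)}{k^2}\,\BEu{m}c\Big(\frac{dfhk^2}{(d,f)}m\Big).
\]
The error term is $O\big((N(d,f)/(df))^{1/2}\,4^{\omega(df)}\big)$.

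Next we bound the errors. Since $d\mid a(f)$ and $a(f)\ll f^\ve$, we have $d\ll f^\ve$ and the number of admissible $d$ is $\ll f^\ve$. Summing the Lemma~\ref{lem_sqfree_coprime} error over $f\le N$ and these $d$ then gives $N^{1/2+\ve}\sum_{f\in\sF}f^{-1/2+\ve}$-type bounds. As in \eqref{est b f neg half sum} this is $O(N^{1/2+\ve})$ after dividing by $N$ appropriately. We then truncate step by step.

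\textbf{(i) Cut $f>F$.} The $d$-sum has $\ll f^\ve$ terms, each weighted by $(d,f)/(df)\le 1/f$, and $\sum_{h\in\sH}1/h\ll f^\ve$. Lemma~\ref{squarefull_estimate} then gives $O(F^{-1/2+\ve})$.

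\textbf{(ii) Cut $d>D$ with $f\le F$.} Here $d\mid a(f)$ forces $d\le a(f)\ll f^{\ve}$. Choosing $\ve$ tiny, the terms with $d>D$ exist only when $f\gg D^{1/\ve}$. Their total is therefore bounded by the tail $\sum_{f>D^{1/\ve}}f^{-1+\ve}\ll D^{-(1/2-\ve)/\ve}$, which is $\ll D^{-10}$ once $\ve$ is small.

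\textbf{(iii) Cut $h>H$.} This uses Lemma~\ref{lem_reciprocical_friable} with $q=df/(d,f)\le DF$, giving $H^{-1/(4\log(DF))}$ times the convergent sum $\sum_f f^{-1+\ve}\log(DF)$. The $\log(DF)$ factor is harmless, exactly as in Proposition~\ref{prop_bc_invariant}; if needed it is absorbed since $\log f$-weighted sums converge and $\log d\ll\log f$.

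\textbf{(iv) Cut $k>K$.} This uses $\sum_{k>K}k^{-2}\ll K^{-1}$. The hypothesis $K\le\sqrt{N/(DFH)}$ guarantees that the range $k\le\sqrt{N(d,f)/(dfh)}$ contains $k\le K$.

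The main obstacle is step (ii). We must make sure the $d$-truncation costs only $O(D^{-10})$ uniformly, which hinges on $d\le a(f)\ll_\ve f^\ve$ making large $d$ force very large $f$. Closely related is keeping the $d$-sum from inflating the other error terms. Both are handled by the divisor bound for $a(f)$ together with the $f^{\ve}$ slack in Lemma~\ref{squarefull_estimate}.
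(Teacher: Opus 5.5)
Your proposal is correct and follows the paper's proof essentially step by step. It uses the same M\"obius detection of $(n,a(n))=1$, the same substitution $q=\frac{d}{(d,f)}q'$ feeding into Lemma~\ref{lem_sqfree_coprime} with modulus $\frac{df}{(d,f)}$, and the same successive truncations, with the $d>D$ cut resting, as in the paper, on $d\le a(f)\ll f^{\ve}$ forcing $f$ to be very large. The differences are cosmetic: you truncate in $f$ before $d$, and you phrase the $D^{-10}$ bound as an $f$-tail rather than via the paper's splitting $f^{-1}\ll d^{-10}f^{-9/10}$. One wording fix: in the error-term sum, keep the restriction $f\le N$, since $\sum_{f\in\sF}f^{-1/2+\ve}$ itself diverges.
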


\begin{proof}

By the fact that $\sum_{d\mid n} \mu(d)=1_{n=1}$, we have
\begin{equation}
	S_0\colonequals\sum_{\substack{1\leq n\leq N, n\notin\sS\\ (n,a(n))=1}} b(n)c(n)= \sum_{\substack{1\leq n\leq N\\ n\notin\sS}} b(n)c(n)  \sum_{d\mid (n,a(n))} \mu(d) =\sum_{d\leq N}\mu(d) \sum_{\substack{1\leq n\leq N\\ n\notin\sS\\ d\mid n, d\mid a(n)}} b(n)c(n).
\end{equation}

 We write $n$ as $n=fq$ with $(f,q)=1$, $f\in\sF$ and $q\in \sS$, then $n\notin \sS$ if and only if $f>1$. Since $a(n)$ and $b(n)$ are $s$-functions, we have $a(n)=a(f)$ and $b(n)=b(f)$. So
 \begin{equation}\label{pf_eqn_prop_BRD_sum}
S_0=\sum_{d\leq N}\mu(d) \sum_{\substack{1\leq fq\leq N\\1<f\in\sF, q\in \sS\\ d\mid fq, d\mid a(f)\\(q,f)=1}}b(f)c(fq)=\sum_{d\leq N}\mu(d) \sum_{\substack{1< f\leq N\\f\in\sF\\ d\mid a(f)}} b(f) \sum_{\substack{q\leq N/f\\q\in \sS\\ d\mid fq\\(q,f)=1}}c(fq).
\end{equation}
 
For the innermost summation in \eqref{pf_eqn_prop_BRD_sum}, we have
\begin{equation}\label{pf_eqn_prop_BRD_innersum}
		\sum_{\substack{q\leq N/f\\ q\in \sS \\ d\mid fq\\ (f,q)=1}} c(fq)= \sum_{\substack{q\leq N/f\\ q\in \sS \\ \frac{d}{(d,f)}\mid q\\ (f,q)=1}} c(fq)=\sum_{\substack{q\leq N/f\\ q = \frac{d}{(d,f)} l\in \sS \\ (f,q)=1}} c(fq)=\sum_{\substack{l\leq \frac{N(d,f)}{df}\\  l\in \sS \\ (l, \frac{df}{(d,f)})=1}} c\big(\frac{df}{(d,f)}l\big)=\sum_{\substack{l\leq \frac{N(d,f)}{df} \\ (l, \frac{df}{(d,f)})=1}} \mu^2(l)c\big(\frac{df}{(d,f)}l\big).
\end{equation}
Then plugging \eqref{pf_eqn_prop_BRD_innersum} into \eqref{pf_eqn_prop_BRD_sum} gives the following decomposition of $S_0$:
\begin{equation}\label{pf_eqn_prop_BRD_decom}
	S_0=  \sum_{d\leq N}  \mu(d) \sum_{\substack{1<f\leq N\\ f\in \sF \\ d\mid a(f)}} b(f)\sum_{\substack{l\leq \frac{N(d,f)}{df} \\ (l, \frac{df}{(d,f)})=1}} \mu^2(l)c\big(\frac{df}{(d,f)}l\big).
\end{equation}

By Lemma~\ref{lem_sqfree_coprime}, we have
\begin{multline}\label{pf_eqn_prop_BRD_sqfree}
	\sum_{\substack{l\leq \frac{N(d,f)}{df} \\ (l, \frac{df}{(d,f)})=1}} \mu^2(l)c\big(\frac{df}{(d,f)}l\big)=\frac{N(d,f)}{df}\sum_{h\in \sH(\frac{df}{(d,f)})}  \frac{\lambda(h)}{h} \sum_{k\leq \sqrt{\frac{N(d,f)}{dfh}}}\frac{\mu(k)}{k^2} \BEu{m\leq \frac{N(d,f)}{dfhk^2}} c(\frac{dfhk^2}{(d,f)}m)\\ +O\Bigg(\sqrt{\frac{N(d,f)}{df}} \prod_{p\mid \frac{df}{(d,f)}}\big(1-p^{-1/2}\big)^{-1}\Bigg).
\end{multline}

Let $S=\frac{S_0}{N}$. Plugging \eqref{pf_eqn_prop_BRD_sqfree} into \eqref{pf_eqn_prop_BRD_decom} gives
\begin{equation}
	S=\sum_{d\leq N}  \mu(d) \sum_{\substack{1<f\leq N\\ f\in \sF \\ d\mid a(f)}} \frac{(d,f)b(f)}{df} \sum_{h\in \sH(\frac{df}{(d,f)})}  \frac{\lambda(h)}{h} \sum_{k\leq \sqrt{\frac{N(d,f)}{dfh}}}\frac{\mu(k)}{k^2} \BEu{m\leq \frac{N(d,f)}{dfhk^2}} c(\frac{dfhk^2}{(d,f)}m)+ \mathcal{R},
	\end{equation}
where
\begin{equation}
	\mathcal{R}\ll \frac{1}{N^{1/2}}\sum_{d\leq N}  \frac{|\mu(d)|}{d^{1/2}}\sum_{\substack{1<f\leq N\\ f\in \sF \\ d\mid a(f)}} \frac{(d,f)^{1/2}|b(f)|}{f^{1/2}} \prod_{p\mid \frac{df}{(d,f)}}\big(1-p^{-1/2}\big)^{-1}.
\end{equation}

To estimate $\cR$, we observe that if $d\mid a(f)$ for $f\leq N$, then by $a(f)\ll f^\ve$ we have $(d,f)\leq d\ll f^\ve$ and $d\ll N^\ve$ for any $\ve>0$. Since $b$ is also an $s$-function, $b(f)\ll f^\ve$, then
\begin{equation}\label{new_observation}
	(d,f)^{1/2}|b(f)|\prod_{p\mid \frac{df}{(d,f)}}\big(1-p^{-1/2}\big)^{-1} \ll f^{3\ve/2} 4^{\omega(df/(d,f))} \ll f^{3\ve/2} (df)^{\ve} \ll f^{4\ve}.
\end{equation}
It follows from $d\ll N^\ve$ that 
\begin{equation}
	\cR\ll  \frac{1}{N^{1/2}}\sum_{d\ll N^\ve}  \frac{1}{d^{1/2}}\sum_{\substack{1<f\leq N\\ f\in \sF \\ d\mid a(f)}}    \frac{f^{4\ve}}{f^{1/2}} \ll N^{-1/2+\ve}.
\end{equation}
For any $1\leq D\leq N$, similar to \eqref{eqn_mainthm_asy_pf_err2} we have
\begin{align}
	&\quad\Big|\sum_{D<d\leq N}  \mu(d) \sum_{\substack{1<f\leq N\\ f\in \sF \\ d\mid a(f)}} \frac{(d,f)b(f)}{df} \sum_{h\in \sH(\frac{df}{(d,f)})}  \frac{\lambda(h)}{h} \sum_{k\leq \sqrt{\frac{N(d,f)}{dfh}}}\frac{\mu(k)}{k^2} \BEu{m\leq \frac{N(d,f)}{dfhk^2}} c(\frac{dfhk^2}{(d,f)}m)\Big| \nonumber \\
	&\leq \sum_{d>D}|\mu(d)| \sum_{\substack{1<f\leq N\\ f\in \sF \\ d\mid a(f)}} \frac{(d,f)|b(f)|}{df}  \sum_{h\in \sH(\frac{df}{(d,f)})}  \frac{1}{h}  \sum_{k\geq1}\frac{1}{k^2} \Big|\BEu{m\leq \frac{N(d,f)}{dfhk^2}} c(\frac{dfhk^2}{(d,f)}m)\Big| \nonumber \\
	&\ll \sum_{d>D} |\mu(d)| \sum_{\substack{1<f\leq N\\ f\in \sF \\ d\mid a(f)}} \frac{(d,f)|b(f)|}{df}  \prod_{p\mid \frac{df}{(d,f)}} (1+\frac1p).\label{pf_eqn_prop_BRD_error_D}
\end{align}

Similar to the argument of \eqref{new_observation}, by \eqref{a fact on prod p mid f} we have
\begin{equation}\label{new_observation2}
	(d,f)|b(f)|\prod_{p\mid \frac{df}{(d,f)}} (1+\frac1p)\ll f^{4\ve}.
\end{equation}
Take $\ve=1/100$. It follows that \eqref{pf_eqn_prop_BRD_error_D} is bounded by
\begin{align}
	\sum_{d>D}  \frac{1}{d}\sum_{\substack{1<f\leq N\\ f\in \sF \\ d\ll f^{1/100}}}    \frac{f^{4\ve}}{f}  &\ll \sum_{d>D} \frac{1}{d} \sum_{\substack{1<f\leq N\\ f\in \sF}} \frac{f^{4\ve}}{(d^{100})^{1/10}\cdot f^{9/10}} \nonumber \\
&\ll  \sum_{d>D} \frac{1}{d^{11}}\sum_{\substack{1<f\leq N\\ f\in \sF}} \frac{1}{f^{9/10-4\ve}} \nonumber \\
&\ll  \sum_{d>D} \frac{1}{d^{11}}\ll D^{-10}. \label{eqn one over d f d larger than D}
\end{align}
This implies that
\begin{multline}
	S=\sum_{d\leq D}  \mu(d) \sum_{\substack{1<f\leq N\\ f\in \sF \\ d\mid a(f)}} \frac{(d,f)b(f)}{df} \sum_{h\in \sH(\frac{df}{(d,f)})}  \frac{\lambda(h)}{h} \sum_{k\leq \sqrt{\frac{N(d,f)}{dfh}}}\frac{\mu(k)}{k^2} \BEu{m\leq \frac{N(d,f)}{dfhk^2}} c(\frac{dfhk^2}{(d,f)}m) \\ + O(D^{-{10}})+ O(N^{-1/2+\ve}).
\end{multline}
		
For any $1< F \leq N$, we have
\begin{align}
	&\quad\Big|\sum_{d\leq D}  \mu(d) \sum_{\substack{F<f\leq N\\ f\in \sF \\ d\mid a(f)}} \frac{(d,f)b(f)}{df} \sum_{h\in \sH(\frac{df}{(d,f)})}  \frac{\lambda(h)}{h} \sum_{k\leq \sqrt{\frac{N(d,f)}{dfh}}}\frac{\mu(k)}{k^2} \BEu{m\leq \frac{N(d,f)}{dfhk^2}} c(\frac{dfhk^2}{(d,f)}m)\Big| \nonumber \\
	&\ll \sum_{d\leq D}  |\mu(d)| \sum_{\substack{F<f\leq N\\ f\in \sF \\ d\mid a(f)}} \frac{(d,f)|b(f)|}{df} \sum_{h\in \sH(\frac{df}{(d,f)})}  \frac{1}{h} \nonumber \\
	&\ll \sum_{d=1}^\infty\frac{1}{d}  \sum_{\substack{f>F\\ f\in \sF \\ d\mid a(f)}} \frac{(d,f)|b(f)|}{f}\prod_{p\mid \frac{df}{(d,f)}}\of{1+\frac{1}{p}}. \label{pf_eqn_prop_BRD_error_F}
\end{align}

For the inner summation in \eqref{pf_eqn_prop_BRD_error_F}, observing that $f\in \sF$ and $a(f)\ll f^\ve$, by \eqref{new_observation2} and Lemma~\ref{squarefull_estimate}  we have
\begin{equation}
    \sum_{\substack{f>F\\ f\in \sF \\ d\mid a(f)}} \frac{(d,f)|b(f)|}{f}\prod_{p\mid \frac{df}{(d,f)}}\of{1+\frac{1}{p}}\ll \sum_{\substack{f>F\\ f\in \sF \\ d^2\ll f^{\ve}}} \frac{f^{5\ve}}{f^\ve f}\ll d^{-2}F^{-1/2+5\ve}.
\end{equation}
This implies that \eqref{pf_eqn_prop_BRD_error_F} is bounded by $O\of{F^{-1/2+\ve}}$ and gives  
\begin{multline}
	S=\sum_{d\leq D}  \mu(d) \sum_{\substack{1<f\leq F\\ f\in \sF \\ d\mid a(f)}} \frac{(d,f)b(f)}{df} \sum_{h\in \sH(\frac{df}{(d,f)})}  \frac{\lambda(h)}{h} \sum_{k\leq \sqrt{\frac{N(d,f)}{dfh}}}\frac{\mu(k)}{k^2} \BEu{m\leq \frac{N(d,f)}{dfhk^2}} c(\frac{dfhk^2}{(d,f)}m) \\ + O\of{F^{-1/2+\ve} } +  O(D^{-10})+ O(N^{-1/2+\ve}).
\end{multline}
For any $H\ge1$, we have
\begin{align}
    &\quad\Big|\sum_{d\leq D}  \mu(d) \sum_{\substack{1<f\leq F\\ f\in \sF \\ d\mid a(f)}} \frac{(d,f)b(f)}{df} \sum_{\substack{h> H\\h\in \sH(\frac{df}{(d,f)})}}   \frac{\lambda(h)}{h} \sum_{k\leq \sqrt{\frac{N(d,f)}{dfh}}}\frac{\mu(k)}{k^2} \BEu{m\leq \frac{N(d,f)}{dfhk^2}} c(\frac{dfhk^2}{(d,f)}m)\Big| \nonumber \\
	&\ll \sum_{d\leq D}  |\mu(d)| \sum_{\substack{1<f\leq F\\ f\in \sF \\ d\mid a(f)}} \frac{(d,f)|b(f)|}{df} \sum_{\substack{h> H\\h\in \sH(\frac{df}{(d,f)})}}  \frac{1}{h} \nonumber \\
	&\ll H^{-\frac{1}{4\log(DF)}} \sum_{d\leq D}  |\mu(d)| \sum_{\substack{1<f\leq F\\ f\in \sF \\ d\mid a(f)}} \frac{(d,f)f^\ve}{df} \log \frac{df}{(d,f)} \nonumber \\
	&\ll  H^{-\frac{1}{4\log(DF)}} .  \label{pf_eqn_prop_BRD_error_H}
   \end{align} 
The second last line in \eqref{pf_eqn_prop_BRD_error_H} follows by Lemma~\ref{lem_reciprocical_friable}. This implies that 
\begin{multline}
	S=\sum_{d\leq D}  \mu(d) \sum_{\substack{1<f\leq F\\ f\in \sF \\ d\mid a(f)}} \frac{(d,f)b(f)}{df} \sum_{\substack{h\leq H\\h\in \sH(\frac{df}{(d,f)})}}  \frac{\lambda(h)}{h} \sum_{k\leq \sqrt{\frac{N(d,f)}{dfh}}}\frac{\mu(k)}{k^2} \BEu{m\leq \frac{N(d,f)}{dfhk^2}} c(\frac{dfhk^2}{(d,f)}m) \\ +O\of{H^{-\frac{1}{4\log(DF)}}}+ O\of{F^{-1/2+\ve} } +  O(D^{-10})+ O(N^{-1/2+\ve}).
\end{multline}

For $1\leq K\leq \sqrt{\frac{N}{DFH}}$,  the following series
\begin{equation}
	\sum_{d\ge1}  |\mu(d)| \sum_{\substack{f>1\\ f\in \sF \\ d\mid a(f)}} \frac{(d,f)|b(f)|}{df} \sum_{h\in \sH(\frac{df}{(d,f)})}  \frac{1}{h}
\end{equation}
is convergent. Hence by $\sum_{k>K}\frac{|\mu(k)|}{k^2}=O(K^{-1})$, we eventually obtain that
\begin{multline}
	S=\sum_{d\leq D}  \mu(d) \sum_{\substack{1<f\leq F\\ f\in \sF \\ d\mid a(f)}} \frac{(d,f)b(f)}{df} \sum_{\substack{h\leq H\\h\in \sH(\frac{df}{(d,f)})}}  \frac{\lambda(h)}{h} \sum_{k\leq K}\frac{\mu(k)}{k^2} \BEu{m\leq \frac{N(d,f)}{dfhk^2}} c(\frac{dfhk^2}{(d,f)}m) \\ +O(K^{-1})+O\of{H^{-\frac{1}{4\log(DF)}}}+ O\of{F^{-1/2+\ve} } +  O(D^{-10})+ O(N^{-1/2+\ve}).\end{multline}
This completes the proof of Proposition~\ref{prop_BRD}. 
\end{proof}

\begin{proof}[Proof of Theorem~\ref{mainthm_dyn}]

First, we split the sum into two parts according to $n\in \sS$ and $n\notin \sS$:
	\begin{equation}\label{eqn_mainthm_asy_total}
		\frac1N\sum_{\substack{1\leq n\leq N\\ (n,a(n))=1}} b(n)c(n)= \frac1N\sum_{\substack{1\leq n\leq N, n\in \sS\\ (n,a(1))=1}} b(1)c(n) +  \frac1N\sum_{\substack{1\leq n\leq N, n\notin\sS\\ (n,a(n))=1}} b(n)c(n) \colonequals S_1(N)+S_2(N).
	\end{equation}
	
For $S_1(N)$, if $a(1)=0$, then $S_1(N)=b(1)c(1)/N$ and so $\lim_{N\to\infty} S_1(N)=0$.  If $a(1)\ge1$, then by \cite[Theorem~1.1]{LWWY2025aa}, we have
\begin{equation}\label{f_1}
	\lim_{N\to\infty} S_1(N)=A\cdot \frac{6}{\pi^2} b(1)\prod_{p\mid a(1)}\big(1+\frac1p\big)^{-1}.
\end{equation}

For $S_2(N)$, fixing $D,F,H,K$ and taking $N \to\infty$ in \eqref{eqn_prop_BRD_coprime}, we get that
\begin{multline}
	\lim_{N\to\infty}S_2(N) =A\cdot\sum_{d\leq D}  \mu(d) \sum_{\substack{1<f\leq F\\ f\in \sF \\ d\mid a(f)}} \frac{(d,f)b(f)}{df} \cdot \\ \cdot \sum_{\substack{h\leq H\\h\in \sH(\frac{df}{(d,f)})}}  \frac{\lambda(h)}{h} \sum_{k\leq K}\frac{\mu(k)}{k^2}  +O(K^{-1})+O\of{H^{-\frac{1}{4\log(DF)}}}+ O\of{F^{-1/2+\ve} } +  O(D^{-10}).
	\end{multline}

Then taking $K, H, F, D\to\infty$, respectively, we get that 
\begin{equation}\label{f_2}
	\lim_{N\to\infty}S_2(N)=A\cdot \frac6{\pi^2}\sum_{d=1}^\infty  \mu(d) \sum_{\substack{f>1\\ f\in \sF \\ d\mid a(f)}} \frac{(d,f)b(f)}{df} \prod_{p\mid \frac{df}{(d,f)}}\big(1+\frac1p\big)^{-1}.
\end{equation}

Combining \eqref{f_1} and \eqref{f_2}, we obtain that
\begin{equation}
	\lim_{N\to\infty} \frac1N\sum_{\substack{1\leq n\leq N\\ (n,a(n))=1}} b(n)c(n)= A\cdot \frac6{\pi^2} \sum_{d=1}^\infty\frac{\mu(d)}{\sigma(d)} \sum_{\substack{ f>1_{a(1)=0}\\f\in\sF\\ d\mid a(f)}} \frac{(d,f)b(f)}{\sigma_f} \prod_{p\mid (d,f)} (1+\frac1p),
\end{equation}
which is equal to $\rho\cdot A$ by the definition of $\rho$ in \eqref{dfn_rho}.
This completes the proof of Theorem~\ref{mainthm_dyn}.
\end{proof}

\begin{remark}
Let $a\colon\N\to\Z_{\ge0}, b\colon\N\to\C$ be two $s$-functions, and $a(n)\ge1$ for $n\ge2$. Similar to the argument of Theorem~\ref{mainthm_sfcn}, one can show that
	\begin{equation}
		\sum_{\substack{1\leq n\leq N\\ (n,a(n))=1}} b(n) = \rho \cdot N+ O_\ve(N^{1/2+\ve})
	\end{equation}
for any $\ve>0$, where $\rho$ is defined in \eqref{dfn_rho}. This generalizes Ding's result \eqref{Ding2022} to $s$-functions with an inferior error term. 
\end{remark}

\section{Proof of Theorem~\ref{mainthm_BR_EI}} \label{sec_mainthm_BR_EI_pf}

In this section, we will prove Theorem~\ref{mainthm_BR_EI}. To prove it, we will use the dynamical generalization of the PNT for arithmetic progressions established by Bergelson and Richter in \cite[Corollary 1.16]{BergelsonRichter2022}. They proved that given a uniquely ergodic topological dynamical system  $(X, \nu, T)$, we have
\begin{equation}\label{BR_pntap} 
	\lim_{N\to\infty}\frac1N \sum_{1\leq n \leq N} h(T^{\Omega(mn+r)}x)= \int_X h \,\nu 
\end{equation}
for any $h \in C(X)$, $x\in X$, $m\in \N$ and $r\in \set{0,1\dots,m-1}$.

For integers \( f_1, f_2 \in\N\) and \( (f_1,f_2)=1 \), we define
$$\cA_{f_1,f_2}(N)=\{q_1\in\N: q_1f_1\leq N, q_2f_2-q_1f_1=1, (q_1,f_1)=(q_2,f_2)=1, \mu^2(q_1)=\mu^2(q_2)=1\}.$$ In \cite{ErdosIvic1987}, to establish \eqref{ErdosIvic1987_thm1} and \eqref{ErdosIvic1987_thm2}, Erd\H{o}s and Ivi\'c estimate the size of $\cA_{f_1,f_2}(N)$.

\begin{lemma}[{\cite[Lemma 1]{ErdosIvic1987}}]\label{lem_ErdosIvic1987}
    Let \( f_1, f_2 \) be two natural numbers such that \( (f_1,f_2)=1 \). Then for \( 1 \leq f_1 \leq \sqrt{N} \), we have
\begin{equation}
	|\cA_{f_1,f_2}(N)|=\rho_4\cdot N+O\Big(N^{\frac{3}{4}}(f_1f_2)^{-\frac{1}{2}}2^{\omega(f_1f_2)}+(\frac{N}{f_1})^{\frac{1}{2}}+(\frac{N}{f_2})^{\frac{1}{2}}\frac{1}{f_1}\Big),
\end{equation}
where\footnote{In \cite{ErdosIvic1987}, there is an extra factor $6/\pi^2$ for the expressions of $\rho_4$; see footnote~\ref{footnote}.}
\begin{equation}\label{dfn_rho_4}
	\rho_4=\frac{1}{f_1f_2}\prod\limits_{p\mid f_1f_2}(1-\frac{1}{p})\prod\limits_{p\nmid f_1f_2}(1-\frac{2}{p^{2}}).
\end{equation}
\end{lemma}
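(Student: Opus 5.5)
The plan is to count $\cA_{f_1,f_2}(N)$ by Möbius inversion for the squarefree conditions and a congruence count for the linear condition. Since $(f_1,f_2)=1$, the equation $q_2f_2-q_1f_1=1$ is solvable in $q_1$ exactly when $f_1q_1\equiv -1 \pmod{f_2}$. This pins $q_1$ to a single residue class modulo $f_2$, and $q_2=(q_1f_1+1)/f_2$ is then determined. The conditions $(q_1,f_1)=1$ and $(q_2,f_2)=1$ should then be absorbed into the squarefree conditions, so that we count integers $q_1\le N/f_1$ in that class with $f_1q_1$ and $f_2q_2=f_1q_1+1$ both "squarefree away from the squarefull parts". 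Concretely, I would write
\[
\mu^2(q_1)1_{(q_1,f_1)=1}=\sum_{d^2\mid q_1}\mu(d)\,1_{(q_1,f_1)=1},
\]
and similarly for $q_2$. A cleaner route is to detect $(q_1,f_1)=1$ by a sum over $e\mid (q_1,f_1)$, and likewise for $q_2$.

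This reduces the count to
\[
\sum_{d_1,d_2}\mu(d_1)\mu(d_2)\sum_{e_1\mid f_1,\,e_2\mid f_2}\mu(e_1)\mu(e_2)\,\#\{q_1\le N/f_1:\ q_1\equiv r \ (f_2),\ d_1^2e_1\mid q_1,\ d_2^2e_2\mid q_2\}.
\]
The divisibility conditions on $q_1$ and $q_2$ are congruences modulo $d_1^2e_1$ and modulo $d_2^2e_2f_2$ on $f_1q_1+1$. They are compatible only when $(d_1,d_2)=1$, with further coprimality constraints against $f_1,f_2$. By CRT each surviving inner count equals
\[
\frac{N}{f_1f_2d_1^2d_2^2e_1e_2}+O(1).
\]
I would truncate at $d_1d_2\le y$ and use the trivial bound $N/(f_1 d_1^2)$, or the analogue for $d_2$, on the tail. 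Choosing $y\asymp N^{1/4}(f_1f_2)^{-1/2}$ should balance the $O(1)$ terms against the tail. This gives the $N^{3/4}(f_1f_2)^{-1/2}2^{\omega(f_1f_2)}$ error; the factor $2^{\omega}$ comes from the number of pairs $e_1,e_2$. The side terms $(N/f_1)^{1/2}$ and $(N/f_2)^{1/2}/f_1$ arise when one of $d_1$ or $d_2$ alone is large, where the size of $q_1$ or of $q_2\le (N+1)/f_2$ respectively caps the count. The hypothesis $f_1\le\sqrt N$ keeps these ranges meaningful.

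The completed main term is an Euler product. A prime $p\nmid f_1f_2$ can divide $q_1$ or $q_2$ but not both, so $d_1^2$ and $d_2^2$ contribute the local factor $1-2/p^2$. For $p\mid f_1$, the local factor is $1-1/p$ from $e_1$; $d_1$ is forced coprime to $p$ there, and $p\nmid q_2$ automatically. Symmetrically, primes $p\mid f_2$ give $1-1/p$. Together with the $1/(f_1f_2)$ from the residue-class density, this yields $\rho_4$.

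The main obstacle I expect is the careful local bookkeeping. I need to verify, prime by prime, that the combined congruence conditions are consistent, and that the densities multiply exactly to $(1-1/p)$ and $(1-2/p^2)$ with no spurious global constant $6/\pi^2$. That constant is precisely the error flagged in the footnote, so this product must be evaluated directly rather than via the faulty identity. Balancing the truncation to obtain exactly the stated three error terms is routine but fiddly.
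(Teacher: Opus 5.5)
Your main-term analysis is sound and follows the Erd\H{o}s--Ivi\'c route. The paper quotes this lemma without proof, but it reruns the same expansion, with the weight $c(q_1)$, in the proof of Theorem~\ref{BR_A_f_1_f_2}. You impose $(d_1,f_1)=1$ before detecting $(q_1,f_1)=1$ by $e_1$, so the conditions really are $d_1^2e_1\mid q_1$ and $d_2^2e_2f_2\mid f_1q_1+1$. Incompatible tuples give empty counts, CRT gives $\frac{N}{f_1f_2d_1^2d_2^2e_1e_2}+O(1)$, and your local factors $1-1/p$ and $1-2/p^2$ give $\rho_4$ with no spurious $6/\pi^2$.

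The gap is in the error analysis, which as stated does not work. With a cut $d_1d_2\le y$, the CRT remainders number only $\ll 2^{\omega(f_1f_2)}y\log y$. For your $y$ that is about $2^{\omega(f_1f_2)}N^{1/4}(f_1f_2)^{-1/2}\log N$, so nothing is balanced and $N^{3/4}(f_1f_2)^{-1/2}2^{\omega(f_1f_2)}$ does not come out.

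More seriously, the tail $d_1d_2>y$ is not controlled by ``the trivial bound $N/(f_1d_1^2)$''. For fixed $d_1$, that bound is available only after the \emph{complete} sum over $d_2$ has been recombined into $\mu^2(q_2)1_{(q_2,f_2)=1}\le 1$. In the region $d_1d_2>y$ you only have the partial sums over $d_2>y/d_1$. Bounding those pair by pair by $\frac{N}{f_1f_2d_1^2d_2^2}+1$ costs one unit for each of the roughly $N/\sqrt{f_1f_2}$ admissible pairs, which is worse than trivial. Divisor-bound tricks only give about $N^{1+\ve}/(f_1f_2\sqrt{y})$, far from the target.

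The missing idea is an asymmetric cut in $d_1$ alone, at $z=N^{1/4}f_1^{-1/2}$. This is where $f_1\le\sqrt N$ enters: it makes $z\ge 1$.
\begin{itemize}
\item For $d_1>z$, do not expand $\mu^2(q_2)1_{(q_2,f_2)=1}$ at all. Bound it by $1$ and count $q_1\le N/f_1$ with $d_1^2\mid q_1$ in the fixed class mod $f_2$. This gives $\sum_{z<d_1\le\sqrt{N/f_1}}\big(\frac{N}{f_1f_2d_1^2}+1\big)\ll N^{3/4}f_1^{-1/2}f_2^{-1}+(N/f_1)^{1/2}$.
\item For $d_1\le z$, expand $\mu^2(q_2)$ with $d_2$ running over its whole natural range $d_2\le\sqrt{(N+1)/f_2}$, with no truncation. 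The CRT remainders then total $2^{\omega(f_1f_2)}\,z\,(N/f_2)^{1/2}=2^{\omega(f_1f_2)}N^{3/4}(f_1f_2)^{-1/2}$. That is the origin of the leading error term.
\item Completing the $d_2$-series costs $\frac{N}{f_1f_2}(f_2/N)^{1/2}=(N/f_2)^{1/2}/f_1$. The $e$-sums factor out as $\prod_{p\mid f_1f_2}(1-1/p)\le 1$, so they cost nothing extra.
\item Completing the $d_1$-series costs $N^{3/4}f_1^{-1/2}f_2^{-1}$.
\end{itemize}
This yields exactly the three stated error terms. It is the $S_1/S_2$ split at $m_1=N^{1/4}f_1^{-1/2}$ in the proof of Theorem~\ref{BR_A_f_1_f_2}. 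Your intuition that $(N/f_1)^{1/2}$ and $(N/f_2)^{1/2}/f_1$ come from the natural caps on $d_1$ and $d_2$ is right. But the leading term is $z$ times the full $d_2$-range, not a product cutoff at $N^{1/4}(f_1f_2)^{-1/2}$.
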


To prove Theorem~\ref{mainthm_BR_EI}, we first show a variant of Bergelson-Richter's theorem over integers in  $\cA_{f_1,f_2}(N)$.

\begin{theorem}\label{BR_A_f_1_f_2}
	Let \( f_1, f_2 \in\N\) and \( (f_1,f_2)=1 \). Let $(X, \nu, T)$ be a uniquely ergodic system. Then  we have 
\begin{equation}\label{BR_A_f_1_f_2_eqn} 
	\lim_{N\to\infty}\frac1N \sum_{q_1\in \cA_{f_1,f_2}(N) } h(T^{\Omega(q_1)}x)= \rho_4\cdot \int_X h \,d\nu
\end{equation}
for any $h\in C(X)$ and $x\in X$, where $\rho_4$ is defined by \eqref{dfn_rho_4}.
\end{theorem}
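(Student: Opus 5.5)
We prove the statement by expanding the two squarefree conditions with Möbius sums, which turns the count into sums over arithmetic progressions. The Bergelson--Richter theorem for progressions, \eqref{BR_pntap}, then evaluates each progression sum. The constant is read off by comparison with Lemma~\ref{lem_ErdosIvic1987}, i.e.\ with the case $h\equiv 1$.

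\textbf{Step 1 (reduction to congruences).} Since $(f_1,f_2)=1$, the condition $q_2f_2-q_1f_1=1$ is equivalent to $q_1f_1\equiv -1 \pmod{f_2}$, i.e.\ $q_1\equiv r_0\pmod{f_2}$ for a fixed residue $r_0$. In that case $q_2=(q_1f_1+1)/f_2$ is determined, and $(q_2,f_2)=1$ becomes a congruence condition on $q_1$ modulo $f_2^2$. Writing $\mu^2(q_1)=\sum_{d_1^2\mid q_1}\mu(d_1)$ and $\mu^2(q_2)=\sum_{d_2^2\mid q_2}\mu(d_2)$, the condition $d_2^2\mid q_2$ becomes $q_1f_1\equiv -1 \pmod{d_2^2f_2}$. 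This is solvable only if $(d_2,f_1)=1$, and it is then a single residue class. Also $d_1^2\mid q_1$ together with $d_2^2\mid q_2$ forces $(d_1,d_2)=1$.

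\textbf{Step 2 (truncation).} Fix a parameter $D$ and split the sum into terms with $d_1,d_2\le D$ and the remaining terms. For the tail, the number of $q_1\le N/f_1$ with $d_1^2\mid q_1$ is $\ll N/(f_1d_1^2)$, and the number with $d_2^2\mid q_2$ (with $q_2\le (N+1)/f_2$) is $\ll N/(f_2d_2^2)+1$. Since $q_2\ll N$, only $d_2\ll\sqrt N$ occur, so the $+1$ terms contribute $O(\sqrt N)$. Hence the tail is $O(N/D)+O(\sqrt N)$ uniformly in $h$, because $|h|\le\|h\|_\infty$.

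\textbf{Step 3 (applying \eqref{BR_pntap}).} For fixed $d_1,d_2\le D$, the conditions $d_1^2\mid q_1$, $q_1\equiv r\pmod{d_2^2f_2}$ and $(q_1,f_1)=1$ combine into a finite union of residue classes $q_1\equiv s\pmod{M}$, with $M=d_1^2d_2^2f_1f_2^2$ say; the coprimality with $f_1$ is again handled by a finite Möbius sum over $e\mid f_1$. For each class we write $q_1=Mn+s$ with $n\le N/(f_1M)+O(1)$ and apply \eqref{BR_pntap} with modulus $M$ and residue $s$. This gives
\[
\sum_{\substack{q_1\le N/f_1\\ q_1\equiv s\,(M)}} h(T^{\Omega(q_1)}x)=\frac{N}{f_1M}\int_X h\,d\nu+o(N).
\]
There are finitely many classes for fixed $D$, so letting $N\to\infty$ gives
\[
\lim_{N\to\infty}\frac1N(\text{truncated sum})=C_D\int_X h\,d\nu,
\]
where $C_D$ is the same constant obtained by running the same computation with $h\equiv1$.

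\textbf{Step 4 (identifying the constant).} Apply Steps 1--3 with $h\equiv 1$ and use Lemma~\ref{lem_ErdosIvic1987} (for fixed $f_1$ the condition $f_1\le\sqrt N$ holds eventually): $|\cA_{f_1,f_2}(N)|/N\to\rho_4$. Together with the tail bound this gives $|C_D-\rho_4|\ll 1/D$. Then for general $h$,
\[
\limsup_{N\to\infty}\Big|\frac1N\sum_{q_1\in\cA_{f_1,f_2}(N)} h(T^{\Omega(q_1)}x)-\rho_4\int_X h\,d\nu\Big|\ll \|h\|_\infty/D,
\]
and letting $D\to\infty$ proves \eqref{BR_A_f_1_f_2_eqn}.

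The main obstacle is the tail estimate in Step 2 for the $d_2$-sum. The two counting bounds there need to be checked carefully and made uniform, which is why $d_2$ is restricted to $d_2\ll\sqrt N$. The other delicate point is the bookkeeping of the compatibility conditions $(d_2,f_1)=1$ and $(d_1,d_2)=1$. This is avoided entirely by never computing $C_D$ explicitly and instead matching it against Erdős–Ivić's count.
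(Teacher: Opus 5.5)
Your proof is correct and shares the paper's skeleton: Möbius expansion of both squarefree conditions, reduction to arithmetic progressions, then \eqref{BR_pntap}. It identifies the constant differently.

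The paper computes the constant explicitly. It removes the coprimality conditions with $\sum_{e_i\mid f_i}\mu(e_i)$ and counts each progression as $N/(m_1^2m_2^2e_1e_2f_1f_2)+O(1)$. It then evaluates the resulting double Möbius series with the Euler-product identity of Lemma~\ref{lem_coprime}, obtaining $\prod_{p\nmid f_1f_2}(1-2p^{-2})$.

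You instead calibrate. The limit of the truncated sum is $C_D\int_X h\,d\nu$ with $C_D$ independent of $h$, so the case $h\equiv 1$ together with Lemma~\ref{lem_ErdosIvic1987} pins down $C_D$ up to $O(1/D)$. This spares you all the compatibility bookkeeping. It also makes clear that the argument works for any bounded sequence with the same mean along every arithmetic progression, with coefficient equal to the density of $\cA_{f_1,f_2}(N)$.

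The price is that you import the value of that density from Erd\H{o}s--Ivi\'c, whose original constant carried a spurious factor $6/\pi^2$. The paper's proof never uses Lemma~\ref{lem_ErdosIvic1987}: specialized to $h\equiv 1$, it is an independent derivation of the corrected $\rho_4$. So the paper is self-contained on this point, while your argument is only as reliable as the cited constant.

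One step needs tightening. In Step~2 the tail is indexed by pairs with $d_1>D$ or $d_2>D$. For $d_1>D$ the variable $d_2$ is unrestricted, so the one-variable counts you quote do not bound the double Möbius tail as stated. Truncate sequentially instead, as the paper does for $m_1$:
\begin{itemize}
\item First discard $d_1>D$ while $\mu^2(q_2)\le 1$ is still intact; this costs at most $N/(f_1D)$.
\item Then, for each $d_1\le D$, discard $d_2>D$ using the joint count $\le N/(f_1f_2d_1^2d_2^2)+1$ for a single residue class modulo $d_1^2d_2^2f_2$.
\end{itemize}
This gives a tail of $O(N/D)+O(D\sqrt N)$ rather than $O(N/D)+O(\sqrt N)$. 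That suffices, because $D$ is fixed while $N\to\infty$.
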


To prove Theorem~\ref{BR_A_f_1_f_2}, we first prove an elementary result on the partial Euler product of the Riemann zeta function. It will be used to calculate the coefficient $\rho_4$ in \eqref{BR_A_f_1_f_2_eqn}.

\begin{lemma}\label{lem_coprime}

Let $m,n\in \N$ and $(m,n)=1$. Then we have
\begin{equation}\label{lem_coprime_eqn1}
	\prod_{p\nmid mn} (1-\frac1{p^s}) = \zeta(s)\prod_{p\nmid m} (1-\frac1{p^s}) \prod_{p\nmid n} (1-\frac1{p^s})
\end{equation}
and
\begin{equation}\label{lem_coprime_eqn2}
	\prod_{p\nmid mn} (1-\frac1{p^s}) =\prod_{p\nmid m} (1-\frac1{p^s}) \prod_{p\mid n} (1-\frac1{p^s})^{-1}
\end{equation}
for $\Re(s)>1$.
\end{lemma}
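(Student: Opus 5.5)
Both identities are statements about absolutely convergent Euler products in the half-plane $\Re(s)>1$. So the plan is to factor over primes and compare the finite sets of primes that are excluded.

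Fix $s$ with $\Re(s)>1$. Then $\prod_p(1-p^{-s})$ converges absolutely to $\zeta(s)^{-1}\neq 0$. For any finite set of primes $P$, $\prod_{p\notin P}(1-p^{-s})$ also converges absolutely and equals $\zeta(s)^{-1}\prod_{p\in P}(1-p^{-s})^{-1}$. The factors $1-p^{-s}$ never vanish, since $|p^{-s}|<1$, so every inversion below is legitimate.

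For \eqref{lem_coprime_eqn2}, use that $(m,n)=1$ means no prime divides both $m$ and $n$. The set of primes not dividing $m$ is then the disjoint union of the primes not dividing $mn$ and the primes dividing $n$. Splitting the absolutely convergent product gives
\[
\prod_{p\nmid m}\Big(1-\frac1{p^s}\Big)=\prod_{p\nmid mn}\Big(1-\frac1{p^s}\Big)\prod_{p\mid n}\Big(1-\frac1{p^s}\Big).
\]
Multiplying both sides by the finite nonzero product $\prod_{p\mid n}(1-p^{-s})^{-1}$ yields \eqref{lem_coprime_eqn2}.

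For \eqref{lem_coprime_eqn1}, write $\prod_{p\mid n}(1-p^{-s})^{-1}=\zeta(s)\prod_{p\nmid n}(1-p^{-s})$. This follows from $\prod_{p}(1-p^{-s})=\prod_{p\mid n}(1-p^{-s})\prod_{p\nmid n}(1-p^{-s})$ together with $\prod_p(1-p^{-s})=\zeta(s)^{-1}$. Substituting this into \eqref{lem_coprime_eqn2} gives \eqref{lem_coprime_eqn1} directly.

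No step here is a real obstacle. The only care needed is to justify rearranging the infinite products, which absolute convergence for $\Re(s)>1$ handles, and to use coprimality exactly once, in the disjointness of the prime sets. At $s=2$ this is the corrected identity behind footnote~\ref{footnote}, with the extra factor $\zeta(2)=\pi^2/6$.
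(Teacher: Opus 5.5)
Your proof is correct and is essentially the paper's argument: both compare finitely many excluded Euler factors against $\prod_p(1-p^{-s})=\zeta(s)^{-1}$, and both use $(m,n)=1$ only to split the primes dividing $mn$ into those dividing $m$ and those dividing $n$. The differences are cosmetic. You obtain \eqref{lem_coprime_eqn2} first via the partition $\{p\nmid m\}=\{p\nmid mn\}\sqcup\{p\mid n\}$ and deduce \eqref{lem_coprime_eqn1} from it, whereas the paper derives both by dividing the full product by $\prod_{p\mid m}(1-p^{-s})\prod_{p\mid n}(1-p^{-s})$; you also justify the manipulations by absolute convergence and non-vanishing of the factors, which the paper leaves implicit.
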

\begin{proof}
	By the Euler product of the Riemann zeta function $\zeta(s)$, we have
	\[
	\zeta(s)=\prod_{p}(1-\frac1{p^s})^{-1}.
	\]
Since $(m,n)=1$, we have
\[
\prod_{p\mid mn} (1-\frac1{p^s})=\prod_{p\mid m} (1-\frac1{p^s}) \prod_{p\mid n} (1-\frac1{p^s}).
\]
	Then
	\begin{align*}
		\prod_{p\nmid mn} (1-\frac1{p^s}) &= \frac{\prod_{p} (1-\frac1{p^s})}{\prod_{p\mid mn} (1-\frac1{p^s})} \\
		&=  \frac{\prod_{p} (1-\frac1{p^s})}{\prod_{p\mid m} (1-\frac1{p^s}) \prod_{p\mid n} (1-\frac1{p^s})} \\
		&= \prod_{p}(1-\frac1{p^s})^{-1} \cdot \frac{\prod_{p} (1-\frac1{p^s})}{\prod_{p\mid m} (1-\frac1{p^s})}\cdot  \frac{\prod_{p} (1-\frac1{p^s})}{\prod_{p\mid n} (1-\frac1{p^s})} \\
		&= \zeta(s)\prod_{p\nmid m} (1-\frac1{p^s}) \prod_{p\nmid n} (1-\frac1{p^s}).
	\end{align*}
Moreover, by the second identity above, we have
\begin{equation*}
	\prod_{p\nmid mn} (1-\frac1{p^s}) = \frac{\prod_{p} (1-\frac1{p^s})}{\prod_{p\mid m} (1-\frac1{p^s})}\cdot \frac{1}{\prod_{p\mid n} (1-\frac1{p^s})} = \prod_{p\nmid m} (1-\frac1{p^s}) \prod_{p\mid n} (1-\frac1{p^s})^{-1}.
\end{equation*}
This completes the proof of \eqref{lem_coprime_eqn1} and \eqref{lem_coprime_eqn2}.
\end{proof}

\begin{proof}[Proof of Theorem~\ref{BR_A_f_1_f_2}]

Put $c(n)=h(T^{\Omega(n)}x)$ and $S=\sum_{q_1\in \cA_{f_1,f_2}(N) } c(q_1)$. Then $c(n)$ is bounded, and
\begin{equation*}
	S= \sum_{\substack{q_1,q_2:\,q_1f_1\leq N \\ q_2f_2-q_1f_1=1 \\ (q_1,f_1)=1, (q_2,f_2)=1}} \mu^2(q_1)\mu^2(q_2)c(q_1).
\end{equation*}

First,  using $\mu^2(q_1)=\sum_{m_1^2l_1=q_1}\mu(m_1)$, we have $(m_1,f_1)=(l_1,f_1)=1$. Since $q_2f_2-q_1f_1=1$, we also have $(m_1,f_2)=1$. So
\begin{align}
	S & = \sum_{\substack{m_1\leq \sqrt{\frac{N}{f_1}} \\ (m_1,f_1f_2)=1}} \mu(m_1) \sum_{\substack{l_1\leq \frac{N}{m_1^2f_1}, \,(l_1,f_1)=1 \\ q_2f_2-m_1^2l_1f_1=1 \\  (q_2,f_2)=1}} \mu^2(q_2)c(m_1^2l_1) \nonumber \\
	&= \sum_{\substack{m_1\leq N^{\frac14}f_1^{-\frac12} \\ (m_1,f_1f_2)=1}} ... \quad + \sum_{\substack{ N^{\frac14}f_1^{-\frac12} < m_1 \leq \sqrt{\frac{N}{f_1}}  \\ (m_1,f_1f_2)=1}} ...  \nonumber\\
	&:= S_1 +S_2.
\end{align}

For $S_2$, we have
\begin{align}
	|S_2| &\ll  \sum_{ N^{\frac14}f_1^{-\frac12} < m_1 \leq \sqrt{\frac{N}{f_1}}} \sum_{\substack{l_1\leq \frac{N}{m_1^2f_1} \\ q_2f_2-m_1^2l_1f_1=1 }}1 \nonumber\\
	& =  \sum_{ N^{\frac14}f_1^{-\frac12} < m_1 \leq \sqrt{\frac{N}{f_1}}} \Big(\frac{N}{m_1^2f_1f_2} +O(1)\Big) \nonumber \\
	& \ll_{f_1,f_2} N^{\frac34}.
\end{align}
Here and thereafter, the implied constant may depend on $f_1$ and $f_2$ in this proof.

For $S_1$, we use $\mu^2(q_2)=\sum_{m_2^2l_2=q_2}\mu(m_2)$,  $(m_2,f_2)=(l_2,f_2)=1$ to get that
\begin{equation*}
	S_1 = \sum_{\substack{m_1\leq N^{\frac14}f_1^{-\frac12} \\ (m_1,f_1f_2)=1}} \mu(m_1) \sum_{\substack{m_2\leq \sqrt{\frac{N+1}{f_2}} \\ (m_2,m_1f_1f_2)=1}} \mu(m_2)  \sum_{\substack{l_1\leq \frac{N}{m_1^2f_1} \\ m_2^2l_2f_2-m_1^2l_1f_1=1 \\  (l_1,f_1)=1, (l_2,f_2)=1}}c(m_1^2l_1).
\end{equation*}

To get rid of the coprime restrictions, we use $1_{(l_i,f_i)=1} = \sum_{e_i \mid l_i, e_i\mid f_i} \mu(e_i)$ and write $l_i=d_ie_i$, $i=1,2$. Then
\begin{align}
	S_1& = \sum_{\substack{m_1\leq N^{\frac14}f_1^{-\frac12}\\ (m_1,f_1f_2)=1}} \mu(m_1) \sum_{\substack{m_2\leq \sqrt{\frac{N+1}{f_2}} \\ (m_2,m_1f_1f_2)=1}} \mu(m_2) \sum_{e_1\mid f_1} \mu(e_1) \sum_{e_2\mid f_2} \mu(e_2) \sum_{\substack{d_1\leq \frac{N}{m_1^2e_1f_1} \\ m_2^2d_2e_2f_2-m_1^2d_1e_1f_1=1}}c(d_1e_1m_1^2).
\end{align}

Notice that the number of $d_1\leq \frac{N}{m_1^2e_1f_1}$ such that $m_1^2d_1e_1f_1 \equiv-1 \mod{m_2^2e_2f_2}$ is equal to $\frac{N}{m_1^2m_2^2e_1e_2f_1f_2}+O(1)$. We can write $S_1$ as
\begin{multline}
	S_1 = \frac{N}{f_1f_2}\sum_{e_1\mid f_1} \frac{\mu(e_1)}{e_1} \sum_{e_2\mid f_2} \frac{\mu(e_2)}{e_2} \sum_{\substack{m_1\leq N^{\frac14}f_1^{-\frac12} \\ (m_1,f_1f_2)=1}} \frac{\mu(m_1)}{m_1^2} \sum_{\substack{m_2\leq \sqrt{\frac{N+1}{f_2}} \\ (m_2,m_1f_1f_2)=1}} \frac{\mu(m_2)}{m_2^2} \cdot\\ \cdot\BEu{\substack{d_1\leq \frac{N}{m_1^2e_1f_1} \\ m_2^2d_2e_2f_2-m_1^2d_1e_1f_1=1}}c(d_1e_1m_1^2)  + O(N^{\frac34}).
\end{multline}

Then for any $1\leq H \ll N^{\frac14}$, we have
\begin{multline}\label{BR_A_f_1_f_2_pf}
	S_1 = \frac{N}{f_1f_2}\sum_{e_1\mid f_1} \frac{\mu(e_1)}{e_1} \sum_{e_2\mid f_2} \frac{\mu(e_2)}{e_2} \sum_{\substack{m_1\leq H \\ (m_1,f_1f_2)=1}} \frac{\mu(m_1)}{m_1^2} \sum_{\substack{m_2\leq H \\ (m_2,m_1f_1f_2)=1}} \frac{\mu(m_2)}{m_2^2} \cdot\\ \cdot\BEu{\substack{d_1\leq \frac{N}{m_1^2e_1f_1} \\ m_2^2d_2e_2f_2-m_1^2d_1e_1f_1=1}}c(d_1e_1m_1^2) + O(\frac{N}{H}) + O(N^{\frac34}).
\end{multline}
It follows that
\begin{multline}\label{BR_A_f_1_f_2_pf_final}
	\frac{S}{N} = \frac{1}{f_1f_2}\sum_{e_1\mid f_1} \frac{\mu(e_1)}{e_1} \sum_{e_2\mid f_2} \frac{\mu(e_2)}{e_2} \sum_{\substack{m_1\leq H \\ (m_1,f_1f_2)=1}} \frac{\mu(m_1)}{m_1^2} \sum_{\substack{m_2\leq H \\ (m_2,m_1f_1f_2)=1}} \frac{\mu(m_2)}{m_2^2} \cdot\\ \cdot\BEu{\substack{d_1\leq \frac{N}{m_1^2e_1f_1} \\ m_2^2d_2e_2f_2-m_1^2d_1e_1f_1=1}}c(d_1e_1m_1^2) + O(\frac{1}{H}) + O(N^{-\frac14})
\end{multline}
for any $1\leq H \ll N^{\frac14}$. By \eqref{BR_pntap}, for any fixed $H$ we have
\[
\lim_{N\to\infty}  \BEu{\substack{d_1\leq \frac{N}{m_1^2e_1f_1} \\ m_2^2d_2e_2f_2-m_1^2d_1e_1f_1=1}}c(d_1e_1m_1^2) = \int_X h \,d\nu.
\]

It follows by taking $N\to\infty$ and $H\to\infty$ that
\begin{equation}\label{BR_A_f_1_f_2_pf_rho_4}
	\lim_{N\to\infty} \frac{S}{N}  = \Big(\int_X h \,d\nu\Big) \cdot \frac{1}{f_1f_2}\sum_{e_1\mid f_1} \frac{\mu(e_1)}{e_1} \sum_{e_2\mid f_2} \frac{\mu(e_2)}{e_2} \sum_{\substack{m_1=1 \\ (m_1,f_1f_2)=1}}^\infty \frac{\mu(m_1)}{m_1^2} \sum_{\substack{m_2=1 \\ (m_2,m_1f_1f_2)=1}}^\infty \frac{\mu(m_2)}{m_2^2}.
\end{equation}

Now, we calculate the coefficient. Clearly, we have
\begin{equation}\label{rho_4_pf_eqn1}
	\sum_{e_1\mid f_1} \frac{\mu(e_1)}{e_1} \sum_{e_2\mid f_2} \frac{\mu(e_2)}{e_2} =\prod_{p\mid f_1f_2}(1-\frac1p).
\end{equation}

Since $(m_1,f_1f_2)=1$, by Lemma~\ref{lem_coprime} we have
\begin{equation*}
	\sum_{\substack{m_2=1 \\ (m_2,m_1f_1f_2)=1}}^\infty \frac{\mu(m_2)}{m_2^2} =\prod_{p\nmid m_1f_1f_2} (1-\frac1{p^2}) = \prod_{p\nmid f_1f_2} (1-\frac1{p^2}) \prod_{p\mid m_1} (1-\frac1{p^2})^{-1}.
\end{equation*}
Then
\begin{align}
	&\quad \sum_{\substack{m_1=1 \\ (m_1,f_1f_2)=1}}^\infty \frac{\mu(m_1)}{m_1^2} \sum_{\substack{m_2=1 \\ (m_2,m_1f_1f_2)=1}}^\infty \frac{\mu(m_2)}{m_2^2} \nonumber\\
	&= \Big(\prod_{p\nmid f_1f_2} (1-\frac1{p^2})\Big)\Big(\sum_{\substack{m_1=1 \\ (m_1,f_1f_2)=1}}^\infty \frac{\mu(m_1)}{m_1^2} \prod_{p\mid m_1} (1-\frac1{p^2})^{-1}\Big) \nonumber\\
	&=\prod_{p\nmid f_1f_2} (1-\frac1{p^2})(1-\frac1{p^2} (1-\frac1{p^2})^{-1}) \nonumber\\
	&= \prod_{p\nmid f_1f_2} (1-\frac2{p^2}). \label{rho_4_pf_eqn2}
\end{align}

Combining \eqref{BR_A_f_1_f_2_pf_rho_4}, \eqref{rho_4_pf_eqn1} and \eqref{rho_4_pf_eqn2} together, we conclude that
\[
\lim_{N\to\infty} \frac{S}{N}  = \frac{1}{f_1f_2}\prod\limits_{p\mid f_1f_2}(1-\frac{1}{p})\prod\limits_{p\nmid f_1f_2}(1-\frac{2}{p^{2}}) \cdot\Big(\int_X h \,d\nu\Big).
\]
This completes the proof of Theorem~\ref{BR_A_f_1_f_2}.
\end{proof}

Now, we use Theorem~\ref{BR_A_f_1_f_2} to prove Theorem~\ref{mainthm_BR_EI}.

\begin{proof}[Proof of Theorem~\ref{mainthm_BR_EI}]

Let $f(n)$ be the squarefull part of $n$. Notice that every number $n$ can be written uniquely as $n=qf, (q,f)=1, q\in \sS, f\in \sF$. Put $c(n)=h(T^{\Omega(n)}x)$, then we divide the summation on the left hand side of \eqref{mainthm_BR_EI_eqn} into two parts according to the size of $f(n)$ as follows:
\begin{equation}\label{mainthm_BR_EI_pf_two_parts}
	\sum_{n\leq N} a(n,n+1) c(n)  = \sum_{\substack{n\leq N \\ f(n)\leq N^{\frac12}}} a(n,n+1) c(n) +  \sum_{\substack{n\leq N \\ f(n)> N^{\frac12}}} a(n,n+1) c(n).
\end{equation}
For the second term of \eqref{mainthm_BR_EI_pf_two_parts}, we have
\begin{equation*}
	\sum_{\substack{n\leq N \\ f(n)> N^{\frac12}}} a(n,n+1) c(n) \ll \sum_{\substack{n\leq N \\ f(n)> N^{\frac12}}}n^\ve \ll N^\ve \sum_{\substack{n\leq N \\ f(n)> N^{\frac12}}}1 \ll  N^\ve \sum_{N^{\frac12}<f \leq N} \sum_{q\leq \frac{N}{f}}1 \ll N^{1+\ve} \sum_{f>N^{\frac12}} \frac1f,
\end{equation*}
which is bounded by $O(N^{\frac34+\ve})$ by Lemma~\ref{squarefull_estimate}. Similarly, the following upper bound also holds:
\begin{equation*}
	\sum_{\substack{n\leq N,\,  f(n)\leq N^{\frac12} \\f(n+1)> N^{\frac12}}} a(n,n+1) c(n) =O(N^{\frac34+\ve}).
\end{equation*}
So
\begin{equation}\label{mainthm_BR_EI_pf_err1}
	\sum_{n\leq N} a(n,n+1) c(n)  = \sum_{\substack{n\leq N \\ f(n), f(n+1)\leq N^{\frac12}}} a(n,n+1) c(n) + O(N^{\frac34+\ve}) :=S_1 + O(N^{\frac34+\ve}).
\end{equation}

Let $f_1=f(n), f_2=f(n+1)$ and $n=q_1f_1, n+1=q_2f_2, q_i\in \sS, f_i\in \sF, (q_i,f_i)=1, i=1,2$.
Then $q_2f_2-q_1f_1=1$. Since $a$ is an $s$-function, we have $a(n,n+1)=a(f_1,f_2)$ and $a(f_1,f_2) \ll (f_1f_2)^{\ve/4} \ll N^{\ve/2}$ for $n\leq N$.  For the first term of \eqref{mainthm_BR_EI_pf_err1},\begin{equation*}
	S_1 =  \sum_{\substack{f_1,f_2\in \sF\\f_1,f_2\leq N^{\frac12} \\(f_1,f_2)=1}} a(f_1,f_2)\sum_{\substack{q_1,q_2\in\sS,\, q_1f_1\leq N \\ q_2f_2-q_1f_1=1 \\ (q_1,f_1)=1, (q_2,f_2)=1}}  c(q_1f_1).
\end{equation*}

Then by Lemma~\ref{lem_ErdosIvic1987}, 
\begin{align}
	S_1 &= N\sum_{\substack{f_1,f_2\in \sF\\f_1,f_2\leq N^{\frac12} \\(f_1,f_2)=1}} \frac{a(f_1,f_2)}{f_1f_2}\prod\limits_{p\mid f_1f_2}(1-\frac{1}{p})\prod\limits_{p\nmid f_1f_2}(1-\frac{2}{p^{2}}) \BEu{q_1\in \cA_{f_1,f_2}(N)}c(q_1f_1) \nonumber \\
	& \qquad + O\Bigg(\sum_{\substack{f_1,f_2\in \sF\\f_1,f_2\leq N^{\frac12} }} N^{\frac{\ve}2}\Big(N^{\frac{3}{4}}(f_1f_2)^{-\frac{1}{2}}2^{\omega(f_1f_2)}+(\frac{N}{f_1})^{\frac{1}{2}}+(\frac{N}{f_2})^{\frac{1}{2}}\frac{1}{f_1}\Big)\Bigg) \nonumber \\
	&:=N\cdot S_2+S_3. \label{mainthm_BR_EI_pf_err2}
\end{align}

By the estimate of the error term in \cite[(2.10)]{ErdosIvic1987}, the error term $S_3$ of \eqref{mainthm_BR_EI_pf_err2} is bounded by $O(N^{\frac34+\ve})$. Now we estimate the coefficient $S_2$ of the main term of \eqref{mainthm_BR_EI_pf_err2}. Let $1\leq H\leq N^{\frac12}$. Notice that 
$$\Big| \sum_{\substack{f_1,f_2\in \sF\\H<f_1\leq N^{\frac12},f_2\leq N^{\frac12} \\(f_1,f_2)=1}} \frac{a(f_1,f_2)}{f_1f_2}\prod\limits_{p\mid f_1f_2}(1-\frac{1}{p})\prod\limits_{p\nmid f_1f_2}(1-\frac{2}{p^{2}}) \BEu{q_1\in \cA_{f_1,f_2}(N)}c(q_1f_1) \Big| \ll \sum_{\substack{f_1,f_2\in \sF\\H<f_1\leq N^{\frac12},f_2\leq N^{\frac12}}} \frac{(f_1f_2)^\ve}{f_1f_2},$$
which is bounded by $O(H^{-\frac12+\ve})$ by Lemma~\ref{squarefull_estimate}. It follows that
\begin{equation}\label{mainthm_BR_EI_pf_err3}
	S_2= \sum_{\substack{f_1,f_2\in \sF\\f_1,f_2\leq H \\(f_1,f_2)=1}} \frac{a(f_1,f_2)}{f_1f_2}\prod\limits_{p\mid f_1f_2}(1-\frac{1}{p})\prod\limits_{p\nmid f_1f_2}(1-\frac{2}{p^{2}}) \BEu{q_1\in \cA_{f_1,f_2}(N)}c(q_1f_1) + O(H^{-\frac12+\ve}).
\end{equation}

Combining \eqref{mainthm_BR_EI_pf_err1}-\eqref{mainthm_BR_EI_pf_err3} together, we obtain that
\begin{multline}\label{mainthm_BR_EI_pf_final}
	\frac1N\sum_{n\leq N} a(n,n+1) c(n)  = \sum_{\substack{f_1,f_2\in \sF\\f_1,f_2\leq H \\(f_1,f_2)=1}} \frac{a(f_1,f_2)}{f_1f_2}\prod\limits_{p\mid f_1f_2}(1-\frac{1}{p})\prod\limits_{p\nmid f_1f_2}(1-\frac{2}{p^{2}}) \BEu{q_1\in \cA_{f_1,f_2}(N)}c(q_1f_1)\\ + O(H^{-\frac12+\ve}) + O(N^{-\frac14+\ve}).
\end{multline}

For any fixed $H$ and $f_1\leq H$, by Theorem \ref{BR_A_f_1_f_2} we have
\begin{equation}
	\lim_{N\to\infty} \BEu{q_1\in \cA_{f_1,f_2}(N)}c(q_1f_1) =\int_X h \,d\nu.
\end{equation}
Thus, \eqref{mainthm_BR_EI_eqn} follows by taking $N\to\infty$ and $H\to\infty$ in \eqref{mainthm_BR_EI_pf_final}, respectively.
\end{proof}

\section{A variant of Theorem~\ref{mainthm_BR_EI}} \label{sec_mainthm_BR_EI}

In this section, we consider a new class of functions that are asymptotically uncorrelated to the orbits in \eqref{BR2022thmA}. Let $a,g: \N\to\C$ be two arithmetic functions satisfying $a(n)=\sum_{d\mid n} g(d)$. In general, such form of $a(n)$ is not an $s$-function. In \cite{DengWang2026}, the authors proved that if the series $\sum_{d=1}^\infty \frac{|g(d)|}{d}$ converges,  then $a(n)$ is asymptotically uncorrelated to any bounded function of invariant average. In the following theorem, we establish a variant of Theorem~\ref{mainthm_BR_EI} for such kind of arithmetic functions.

\begin{theorem}\label{mainthm_shifted_conv_sum_BR}
Let $a_1,g_1,a_2,g_2:\N\to\C$ be  arithmetic functions satisfying $g_i(d)\ll d^{-\frac12-\ve}$ for any $\ve>0$ and  $a_i(n)=\sum_{d\mid n} g_i(d)$ for  $i=1,2$.  	Let $(X,\nu, T)$ be uniquely ergodic. Then
	\begin{equation}\label{mainthm_shifted_conv_sum_BR_eqn}
		\lim_{N\to\infty}\frac{1}{N}\sum_{n\leq N} a_1(n)a_2(n+1) h(T^{\Omega(n)}x) = \Big(\sum_{\substack{m,l=1 \\(m,l)=1}}^\infty \frac{g_1(m)g_2(l)}{ml}\Big)\Big(\int_X h \,d\nu\Big)
			\end{equation}
for any $h \in C(X)$ and $x\in X$.
	
\end{theorem}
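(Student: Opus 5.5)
Set $c(n)=h(T^{\Omega(n)}x)$ and expand both divisor sums:
\[
\sum_{n\leq N} a_1(n)a_2(n+1)c(n)=\sum_{m,l} g_1(m)g_2(l)\sum_{\substack{n\leq N\\ m\mid n,\ l\mid n+1}} c(n).
\]
The two conditions $m\mid n$ and $l\mid n+1$ are compatible only when $(m,l)=1$. In that case the Chinese remainder theorem gives $n\equiv r_{m,l}\pmod{ml}$ for a unique residue $r_{m,l}$. Writing $n=mlk+r_{m,l}$, the inner sum becomes a sum of $c$ along the arithmetic progression $\{mlk+r_{m,l}\}_k$. It has $N/(ml)+O(1)$ terms, with $k$ ranging up to $\approx N/(ml)$.

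By \eqref{BR_pntap}, for each fixed coprime pair $(m,l)$ we have
\[
\frac{ml}{N}\sum_{k\leq N/(ml)} h\big(T^{\Omega(mlk+r_{m,l})}x\big)\to\int_X h\,d\nu .
\]
The residue $r=0$ occurs only when $ml=1$, which is just \eqref{BR2022thmA}. So the plan is to fix a cutoff $M$ and split the double sum into the range $m,l\leq M$ and the complement. On the finite range we pass to the limit termwise using \eqref{BR_pntap}, which produces
\[
\Big(\sum_{\substack{m,l\leq M\\(m,l)=1}}\frac{g_1(m)g_2(l)}{ml}\Big)\int_X h\,d\nu .
\]
We then let $M\to\infty$. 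The hypothesis $g_i(d)\ll d^{-1/2-\ve}$ makes $\sum_d |g_i(d)|/d$ converge, so the double series converges absolutely and the limiting constant is as claimed.

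The main obstacle is the tail, where $m>M$ or $l>M$. Here we only have the trivial bound
\[
\Big|\sum_{\substack{n\leq N\\ m\mid n,\ l\mid n+1}}c(n)\Big|\ll \frac{N}{ml}+1,
\]
and the ``$+1$'' summed over all pairs with $ml\leq N+1$ is not obviously $o(N)$. I would handle it as follows.
\begin{itemize}
\item The $N/(ml)$ part contributes $N\sum_{\max(m,l)>M}|g_1(m)g_2(l)|/(ml)$, which is $o(N)$ as $M\to\infty$ by absolute convergence.
\item The $O(1)$ part is nonzero only when $ml\leq N+1$, since otherwise the progression has no terms in $[1,N]$. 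Its total is at most $\sum_{ml\leq N+1}|g_1(m)g_2(l)|$. Using $|g_i(d)|\ll d^{-1/2-\ve}$, this is bounded by $\sum_{k\leq N+1}\tau(k)k^{-1/2-\ve}\ll N^{1/2}\log N=o(N)$.
\end{itemize}
This is exactly where the exponent $-\tfrac12-\ve$ in the hypothesis is used. Dividing by $N$, taking $\limsup_{N\to\infty}$, and then letting $M\to\infty$ completes the proof.
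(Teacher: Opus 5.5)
Your route is the paper's: expand both divisor sums, note $(m,l)=1$, count the $n\le N$ in the resulting progression as $N/(ml)+O(1)$, pass to the limit on a finite range of $(m,l)$ via \eqref{BR_pntap}, and control the rest by the decay of $g_i$. The step you single out as the main obstacle, however, rests on a false claim.

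The ``$+1$'' part does not vanish when $ml>N+1$. When $ml>N$, the progression $n\equiv r_{m,l}\pmod{ml}$ still has one term in $[1,N]$ whenever $1\le r_{m,l}\le N$, and nothing forces $r_{m,l}>N$. For instance, $m=N$, $l=N+1$, $n=N$ satisfies $m\mid n$ and $l\mid n+1$, yet $ml=N(N+1)$. The only constraint is $m\le N$, $l\le N+1$, so your bound $\sum_{k\le N+1}\tau(k)k^{-1/2-\ve}$ covers only a small portion of the contributing pairs. There was a warning sign: with your range, any decay $|g_i(d)|\ll d^{-\delta}$ with $\delta>0$ would have sufficed. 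That contradicts your remark that this is where the exponent $\tfrac12$ enters.

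The repair is immediate and is what the paper does. Over the correct range,
\[
\sum_{m\le N}\sum_{l\le N+1}|g_1(m)g_2(l)|=\Big(\sum_{m\le N}|g_1(m)|\Big)\Big(\sum_{l\le N+1}|g_2(l)|\Big)\ll N^{\frac12-\ve}\cdot N^{\frac12-\ve}=N^{1-2\ve}=o(N).
\]
The paper phrases this as $\sum_{ml\le N(N+1)}|g_1(m)g_2(l)|\ll\sum_{k\le N(N+1)}\tau(k)k^{-1/2-\ve}\ll N^{1-\ve}$. This is where the hypothesis is genuinely needed for this crude bound: since $\sum_{m\le N}m^{-\beta}\asymp N^{1-\beta}$, the product is $o(N)$ only for $\beta>\tfrac12$. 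With this correction, the rest of your argument goes through.

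A minor slip: $r_{m,l}=0$ occurs whenever $l=1$, not only when $ml=1$. This is harmless, since \eqref{BR_pntap} includes $r=0$.
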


For example, using Theorem~\ref{mainthm_shifted_conv_sum_BR} for the arithmetic functions $\frac{\varphi(n)}{n}$ and $\frac{n}{\varphi(n)}$, by $$\frac{\varphi(n)}{n} = \sum_{d\mid n}\frac{\mu(d)}{d}\, \text{ and } \,\frac{n}{\varphi(n)} =\sum_{d\mid n} \frac{\mu^2(d)}{\varphi(d)},$$
we have
\begin{align}
	         \lim\limits_{N \to \infty}\frac{1}{N}\sum\limits_{n=1} ^{N}\frac{\varphi(n)} {n}\cdot \frac{\varphi(n+1)}{n+1}h(T^{\Omega{(n)}}x) & =\prod\limits_{p}(1-\frac{2}{p^{2}})\int_X h \,d\nu, \\
	          \lim\limits_{N \to \infty}\frac{1}{N} \sum\limits_{\substack{n\leq N}}\frac{n}{\varphi(n)}\cdot\frac{n+1}{\varphi(n+1)} h(T^{\Omega{(n)}}x) 
          &= \prod_{p}\left(1+\frac{2}{p(p-1)}\right)\int_Xh \,d\nu.
\end{align}

Now we prove Theorem~\ref{mainthm_shifted_conv_sum_BR}.

\begin{proof}[Proof of Theorem~\ref{mainthm_shifted_conv_sum_BR}]
	Let $c(n)=h(T^{\Omega(n)}x)$, $\alpha=\int_X h \,d\nu$, and
	$$S=\sum_{n\leq N} a_1(n)a_2(n+1)c(n).$$
By $a_i(n)=\sum_{d\mid n} g_i(d)$, $i=1,2$, we have
	\begin{equation*}
		S = \sum_{n\leq N} \Big(\sum_{m\mid n}g_1(m)	\Big)\Big(\sum_{l\mid n+1}g_2(l)\Big) c(n).
	\end{equation*}
	Let $n=ms, n+1=lr$, then $lr-ms=1, (m,l)=1$, and $ml\leq N(N+1)$. We can rewrite $S$ as the following multiple summation:
\begin{equation*}
	S=\sum_{\substack{ml\leq N(N+1) \\ (m,l)=1}} g_1(m)g_2(l) \sum_{\substack{r,s:\, ms\leq N \\lr-ms=1}} c(ms)
\end{equation*}
Let $1\leq H \leq N(N+1)$. Since 
\begin{equation*}
	\sum_{\substack{r,s:\,ms\leq N \\lr-ms=1}}1 = \sum_{\substack{s\leq N/m \\ms\equiv -1 \mod{l}}}1 =\frac{N}{ml} + O(1),
\end{equation*}
we get that
\begin{align}
	S & =\sum_{\substack{ml\leq N(N+1) \\ (m,l)=1}} g_1(m)g_2(l) \Big(\frac{N}{ml} + O(1)\Big) \BEu{\substack{s\leq N/m \\ms\equiv -1 \mod{l}}} c(ms) \nonumber\\
	& = N \sum_{\substack{ml\leq N(N+1) \\ (m,l)=1}} \frac{g_1(m)g_2(l)}{ml} \BEu{\substack{s\leq N/m \\ms\equiv -1 \mod{l}}} c(ms) + O\Big(\sum_{ml\leq N(N+1)} |g_1(m)g_2(l)|\Big) \nonumber\\
	& = N \sum_{\substack{ml\leq H \\(m,l)=1}} \frac{g_1(m)g_2(l)}{ml} \BEu{\substack{s\leq N/m \\ms\equiv -1 \mod{l}}} c(ms) \nonumber \\
	&\qquad + O\Bigg(N \sum_{ml > H} \frac{|g_1(m)g_2(l)|}{ml} \Bigg) + O\Big(\sum_{ml\leq N(N+1)} |g_1(m)g_2(l)|\Big). 
\end{align}

Now, we use the assumption that $g_i(d)\ll d^{-1/2-\ve}$ to estimate the last two error terms. For the first error term, we have
\begin{equation}\label{thm_shifted_conv_sum_pf_err1}
	\sum_{ml > H} \frac{|g_1(m)g_2(l)|}{ml} \ll 	\sum_{ml > H} \frac{1}{(ml)^{\frac32+2\ve}} = \sum_{n > H } \frac{\tau(n)}{n^{\frac{3}{2}+2\ve}} \ll \sum_{n > H } \frac{1}{n^{\frac32+\ve}}  \ll \frac1{H^{\frac12+\ve}},
\end{equation}
where $\tau(n)=\sum_{m\mid n}1$ is the divisor function and satisfies $\tau(n)\ll n^\ve$ for all $n\ge1$. 

For the second error term, we have
\begin{equation}\label{thm_shifted_conv_sum_pf_err2}
	\sum_{ml\leq N(N+1)} |g_1(m)g_2(l)| \ll 	\sum_{ml\leq N(N+1)}  \frac{1}{(ml)^{1/2+\ve}} =	\sum_{n\leq N(N+1)}  \frac{\tau(n)}{n^{\frac12+\ve}} \ll \sum_{n\leq N(N+1)}   \frac{1}{n^{\frac12+\frac{\ve}2}} \ll N^{1-\ve}.
\end{equation}

This gives that
\begin{equation}\label{mainthm_shifted_conv_sum_BR_goal}
	\frac{S}N=  \sum_{\substack{ml\leq H \\(m,l)=1}} \frac{g_1(m)g_2(l)}{ml} \BEu{\substack{s\leq N/m \\ms\equiv -1 \mod{l}}} c(ms) +  O\Big(\frac1{H^{\frac12+\ve}}\Big) + O\Big(\frac1{N^\ve}\Big).
\end{equation}

Since 
$$\lim_{N\to\infty} \BEu{\substack{s\leq N/m \\ms\equiv -1 \mod{l}}} c(ms) = \alpha,$$
taking $N\to\infty$ in \eqref{mainthm_shifted_conv_sum_BR_goal}, we obtain that
\begin{equation}\label{mainthm_shifted_conv_sum_BR_goal2}
	\lim_{N\to\infty} \frac{S}N= \alpha \sum_{\substack{ml\leq H \\(m,l)=1}} \frac{g_1(m)g_2(l)}{ml} +  O\Big(\frac1{H^{\frac12+\ve}}\Big).
\end{equation}

Notice that the double series in the first term of \eqref{mainthm_shifted_conv_sum_BR_goal2} is absolutely convergent and bounded by 
$$\sum_{\substack{m,l=1 \\(m,l)=1}}^\infty \frac{|g_1(m)g_2(l)|}{ml} \leq \Big(\sum_{n=1}^\infty\frac{|g(n)|}{n}\Big)^2 \ll \Big(\sum_{n=1}^\infty\frac{1}{n^{\frac{3}{2}+\ve}}\Big)^2.$$
Thus, \eqref{mainthm_shifted_conv_sum_BR_eqn} follows by taking $H\to\infty$ in \eqref{mainthm_shifted_conv_sum_BR_goal2}.
\end{proof}

\begin{remark}\label{thm_shifted_conv_sum}
Let $c(n)=1$ for all $n\ge1$. Take $H=N(N+1)$ in \eqref{mainthm_shifted_conv_sum_BR_goal}, then we have
	\begin{equation}\label{thm_shifted_conv_sum_eqn}
		\sum_{n\leq N} a_1(n)a_2(n+1)=N\sum_{\substack{m,l=1 \\(m,l)=1}}^\infty \frac{g_1(m)g_2(l)}{ml} + O(N^{1-\ve}).
	\end{equation}
Thus, by \eqref{mainthm_shifted_conv_sum_BR_eqn}  $a_1(n)a_2(n+1)$ and the orbit $\{h(T^{\Omega(n)}x)\}_{n\in \N}$ for any $x\in X$ are asymptotically uncorrelated.
\end{remark}

\section*{Acknowledgments}
This work is supported by the National Natural Science Foundation of China (Grant No. 12561001). Shaoyun Yi is supported by the National Natural Science Foundation of China (Nos.~12301016, 12471187).

\end{document}